%%%%%%%%%%%%%%%%%%%%%%%%%%%%%%%%%%%%%%%%%
\documentclass[12pt, reqno]{amsart}

%--- Packages ---

\usepackage{amsmath,amsfonts,amsthm,amssymb,amsxtra}
\usepackage{bbm} % to get \1
\usepackage{hyperref}	% to get hyperlinks in equations and references

\usepackage{tikz, caption}
\usetikzlibrary{shapes}

%--- Page structure ---

%\addtolength{\hoffset}{-2cm}
%\addtolength{\textwidth}{4cm}

\setlength{\voffset}{-.7truein}
\setlength{\textheight}{8.8truein}
\setlength{\textwidth}{6.05truein}
\setlength{\hoffset}{-.7truein}

%--- Theorem structure ---

\newtheorem{theorem}{Theorem}%[section]

\newtheorem{lemma}[theorem]{Lemma}
\newtheorem{corollary}[theorem]{Corollary}

\theoremstyle{definition}

\newtheorem{definition}[theorem]{Definition}

\theoremstyle{remark}

\numberwithin{equation}{section}
\numberwithin{theorem}{section}

%--- Settings ---

%\numberwithin{equation}{section}

%--- Commands and math operators ---

\renewcommand{\epsilon}{\varepsilon}

\newcommand{\N}{\mathbb{N}}

\renewcommand{\phi}{\varphi}
\newcommand{\R}{\mathbb{R}}

\newcommand{\Z}{\mathbb{Z}}

%%%%%%%%%%%%%%%%%%%%%%%%%%%%%%%%%%%%%%%%%%%%%%%%%%%%%%%%%%%%%%%%%%%%%%%%%%
%%%%%%%%%%%%%%%%%%%%%%%%%%%%%%%%%%%%%%%%%%%%%%%%%%%%%%%%%%%%%%%%%%%%%%%%%%
\setlength\parindent{0pt}

\begin{document}

\title[]{Rearrangement Inequalities on the lattice graph }

\author[]{Shubham Gupta}
\address{Department of Mathematics, Imperial College London, London, UK.} \email{s.gupta19@imperial.ac.uk}

\author[]{Stefan Steinerberger}
\address{Department of Mathematics, University of Washington, Seattle, WA 98195, USA.} \email{steinerb@uw.edu}

\keywords{Rearrangement, Polya-Szeg\"o inequality, graphs, edge-isoperimetry, vertex-isoperimetry, reordering, relabeling, partial differential equations on graphs.}
\subjclass[2010]{05C78, 28A75, 46E30}
\thanks{S.G. is supported by President's Ph.D. scholarship, Imperial College London.}
\thanks{S.S. is supported by the NSF (DMS-2123224) and the Alfred P. Sloan Foundation.}

\begin{abstract}  The Polya-Szeg\H{o} inequality in $\mathbb{R}^n$ states that, given a non-negative function $f:\mathbb{R}^{n} \rightarrow \mathbb{R}_{}$, its spherically symmetric decreasing rearrangement $f^*:\mathbb{R}^{n} \rightarrow \mathbb{R}_{}$   is `smoother' in the sense of $\| \nabla f^*\|_{L^p} \leq \| \nabla f\|_{L^p}$ for all $1 \leq p \leq \infty$. We study analogues on the lattice grid graph $\mathbb{Z}^2$. The spiral rearrangement is known to satisfy the Polya-Szeg\H{o} inequality for $p=1$, the Wang-Wang rearrangement satisfies it for $p=\infty$ and no rearrangement can satisfy it for $p=2$. We develop a robust approach to show that both
these rearrangements satisfy the Polya-Szeg\H{o} inequality up to a constant for all $1 \leq p \leq \infty$. In particular, the Wang-Wang rearrangement satisfies  $\| \nabla f^*\|_{L^p} \leq 2^{1/p}   \| \nabla f\|_{L^p}$ for all $1 \leq p \leq \infty$. We also show the existence of (many) rearrangements on $\mathbb{Z}^d$ such that $\| \nabla f^*\|_{L^p} \leq c_d  \cdot \| \nabla f\|_{L^p}$ for all $1 \leq p \leq \infty$.
\end{abstract}

\maketitle

\section{Introduction} 
\subsection{Rearrangements.}
For continuous functions $f: \mathbb{R}^n \rightarrow \mathbb{R}_{\geq 0}$ the symmetric decreasing rearrangement refers to the process of  rearranging its level sets in such a way that they are preserved in volume while being radially centered around the origin. More formally, if $f: \mathbb{R}^n \rightarrow \mathbb{R}_{\geq 0}$
  is a nonnegative function vanishing at infinity, then its symmetric decreasing rearrangement $f^*:\mathbb{R}^n \rightarrow \mathbb{R}_{\geq 0}$ is the function for which 
    \begin{enumerate}
  \item the super-level sets $\left\{ x \in \mathbb{R}^n: f^*(x) \geq  s\right\}$ are balls centered at the origin 
  \item which have the same measure as the original super-level set
$$
 \left|\left\{ x \in \mathbb{R}^n: f(x) \geq  s\right\}\right| = \left|\left\{ x \in \mathbb{R}^n: f^*(x) \geq  s\right\}\right|.
$$
  \end{enumerate}
 This implies that $ \| f\|_{L^p} = \|f^*\|_{L^p}$ for all $p>0$.
The celebrated Polya-Szeg\H{o} inequality states that for all $1 \leq p \leq \infty$,
\begin{equation} \label{1.1}
 \| \nabla f^* \|_{L^p}  \leq  \| \nabla f \|_{L^p}.
  \end{equation}
  This inequality can be regarded as functional version of the isoperimetric inequality and, as such, has many applications in analysis, partial differential equations and mathematical physics, see Baernstein \cite{baernstein}, Lieb-Loss \cite{lieb2001analysis} and Polya-Szeg\H{o} \cite{faber_szego}. 
Inequalities of this type also appear naturally in variational problems and spectral geometry \cite{lieb1977existence, lieb_hardy, Luttinger_schrodinger, moser1971sharp,faber_szego, talenti1976best}. 
It is an interesting problem whether and to what extent inequalities like \eqref{1.1} can hold when the underlying space is a graph. Throughout this paper $G=(V,E)$ will represent a connected graph with countably infinite vertex set $V$ (which is usually indexed by $\mathbb{N}$). We further assume that every vertex of $G$ has finite degree. For $x, y \in V$, $x \sim y$ means that $(x, y) \in E$. There are natural analogues of $L^p-$spaces of functions and gradients on a graph and we define, for $1 \leq p < \infty$ and $f: V \rightarrow \mathbb{R}$, the norms
$$ \|f\|^p_{L^p} = \sum_{v \in V} |f(v)|^p \qquad \mbox{and} \qquad \| \nabla f\|_{L^p}^p = \sum_{x \sim y}|f(x) - f(y)|^p$$
with the usual modification at $p = \infty$.
\begin{definition}
Let $G=(V,E)$ be a graph. Let $f: V \rightarrow \R$ be a function vanishing at infinity, that is, $\{x \in V: |f(x)|>t\}$ is finite for all $t >0$. A \emph{rearrangement} $f^*$  is described by a bijective mapping $\eta:\mathbb{N} \rightarrow V$ (called \emph{labelling} of the vertex set $V$): for any given function $f$ one then defines
\begin{equation}
    f^*(\eta(k)) := k^{th} \hspace{5pt} \text{largest value assumed by} \hspace{5pt} |f|.
\end{equation}
\end{definition}
This definition of rearrangement follows the approach of Pruss \cite{pruss1998discrete}. Pruss also defined a particular labelling on regular trees called \emph{spiral-like labelling} (\cite[Definition 6.1]{pruss1998discrete}) and proved a discrete analogue of \eqref{1.1} for the infinite regular tree when $p=2$.
\begin{theorem}[Pruss \cite{pruss1998discrete}]\label{thm2.3}
Let $T_q $ be the infinite $q-$regular tree and let $f : V\rightarrow \R_{\geq 0}$ be a function vanishing at infinity and $f^*$ be the rearrangement of $f$ with respect to spiral-like labelling. Then
\begin{equation}\label{1.3}
    \sum_{x \sim y}|f^*(x)-f^*(y)|^2 \leq \sum_{x \sim y}|f(x)-f(y)|^2. 
\end{equation}
\end{theorem}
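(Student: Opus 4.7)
The plan is to reduce Theorem~\ref{thm2.3} to a \emph{nested edge-isoperimetric inequality} via a layer-cake representation, and then establish that combinatorial inequality by a compression argument on the tree.

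\textbf{Step 1 (layer cake).} A direct calculation shows that for $a,b \geq 0$,
\[
|a-b|^2 \;=\; 2\int_0^\infty\!\int_s^\infty \1_{\{\min(a,b)\leq s,\,\max(a,b)\geq t\}}\,dt\,ds.
\]
Summing over $x\sim y$ and applying Fubini gives
\[
\sum_{x\sim y}|f(x)-f(y)|^2 \;=\; c\int_0^\infty\!\int_s^\infty E_{s,t}(f)\,dt\,ds,
\]
where $c>0$ is an unimportant constant and $E_{s,t}(f)$ is the number of edges with one endpoint in $\{f\leq s\}$ and the other in $\{f\geq t\}$. It therefore suffices to establish the pointwise bound $E_{s,t}(f^*)\leq E_{s,t}(f)$ for all $0\leq s<t$.

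\textbf{Step 2 (reduction to a combinatorial inequality).} Setting $A := \{f\geq t\}$ and $B := \{f>s\}$, one has $A\subseteq B$ and $E_{s,t}(f) = e(A,V\setminus B)$, the number of edges between $A$ and $V\setminus B$. For the spiral-like rearrangement, $\{f^*\geq t\}$ and $\{f^*>s\}$ are the initial segments $\eta([1,|A|])$ and $\eta([1,|B|])$. The theorem therefore reduces to the combinatorial claim: \emph{for every pair of finite subsets $A\subseteq B$ of $T_q$, one has $e(A,V\setminus B)\geq e(A^\sharp,V\setminus B^\sharp)$, where $A^\sharp$ and $B^\sharp$ are the spiral-like initial segments of the prescribed cardinalities $|A|$ and $|B|$.}

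\textbf{Step 3 (compression and the main obstacle).} The nested inequality of Step 2 would be proven in two passes. First, with $B$ fixed, iteratively replace a vertex $v\in A$ of large spiral label by a vertex $w\in B\setminus A$ of smaller label; because $T_q$ is a tree rooted at the start of the labelling, each such rooted swap exchanges a ``descendant'' edge for an ``ancestor'' edge and does not increase $e(A,V\setminus B)$, so after finitely many swaps $A$ becomes $A^\sharp$. Second, with $A=A^\sharp$ fixed, compress $B$ toward the root by the same mechanism, using that each swap inside $B\setminus A^\sharp$ only deletes edges leaving $B$ and cannot create new edges from $A^\sharp$ to $V\setminus B$. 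The principal obstacle lies in the elementary swap estimate itself: unlike the classical single-set edge-isoperimetric inequality on a tree, here one must track the \emph{joint} quantity $e(A,V\setminus B)$, and verifying monotonicity under each swap requires a case analysis keyed to the ``ball-plus-prefix-of-the-next-shell'' structure of the spiral initial segments on $T_q$. This is precisely where the specific combinatorics of Pruss's labelling becomes essential; once the single-swap estimate is in place, iterating to the nested pair $(A^\sharp,B^\sharp)$ is routine and, together with Step~1, completes the proof.
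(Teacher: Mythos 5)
First, a remark on scope: the paper does not prove Theorem~\ref{thm2.3} at all --- it is quoted from Pruss \cite{pruss1998discrete} --- so your proposal can only be judged on its own merits. Your Steps~1 and~2 are correct. The identity $|a-b|^2=2\int_0^\infty\int_s^\infty \1_{\{\min(a,b)\le s,\ \max(a,b)\ge t\}}\,dt\,ds$ is easily checked, and together with equimeasurability it reduces the theorem to the nested two-set inequality $e(A,V\setminus B)\ge e(A^\sharp,V\setminus B^\sharp)$ for all finite $A\subseteq B$. This is a sound reduction, close in spirit to Pruss's own route (he writes the Dirichlet form as $q\|f\|_2^2-2\sum_{x\sim y}f(x)f(y)$ and maximizes the bilinear sum, which after a two-parameter layer cake is the complementary counting statement), and it correctly isolates why the result is special to $p=2$: the bilinear layer-cake expansion is what makes the exponent disappear.

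The genuine gap is Step~3: the combinatorial inequality carries essentially all of the content of the theorem, and you do not prove it --- you explicitly defer the ``single-swap estimate.'' Worse, the compression scheme as described fails. With $B$ fixed, replacing $v\in A$ by a smaller-labelled $w\in B\setminus A$ changes $e(A,V\setminus B)$ by $\deg_{V\setminus B}(w)-\deg_{V\setminus B}(v)$, and nothing forces this to be nonpositive. Concretely on $T_3$ (root $1$; neighbours $2,3,4$; children of $2$ labelled $5,6$; children of $5$ labelled, say, $11,12$): take $B=\{2,5,11,12\}$ and $A=\{5\}$. Then $e(A,V\setminus B)=0$, since all three neighbours of $5$ lie in $B$, but swapping $5$ for the smaller label $2$ gives $e(\{2\},V\setminus B)=2$ (the neighbours $1$ and $6$ of $2$ lie outside $B$). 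The same example shows your first pass cannot even terminate at $A^\sharp$, since $A^\sharp=\{1\}$ is not contained in $B$; within $B$ one can only reach the $|A|$ smallest labels of $B$, so the two compressions cannot be decoupled in the order you propose. Repairing this requires either a genuinely simultaneous compression or the actual structure of the spiral-like order (initial segments are an $\ell^1$-ball plus a prefix of the next shell, and the needed monotonicity is proved by induction on shells). As written, the proposal is a correct reduction followed by an unproven claim whose sketched justification contains a false intermediate step.
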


\begin{center}
  \begin{figure}[h!]
  \begin{tikzpicture}[scale=0.7]
\node at (0,0) {\includegraphics[width=0.4\textwidth]{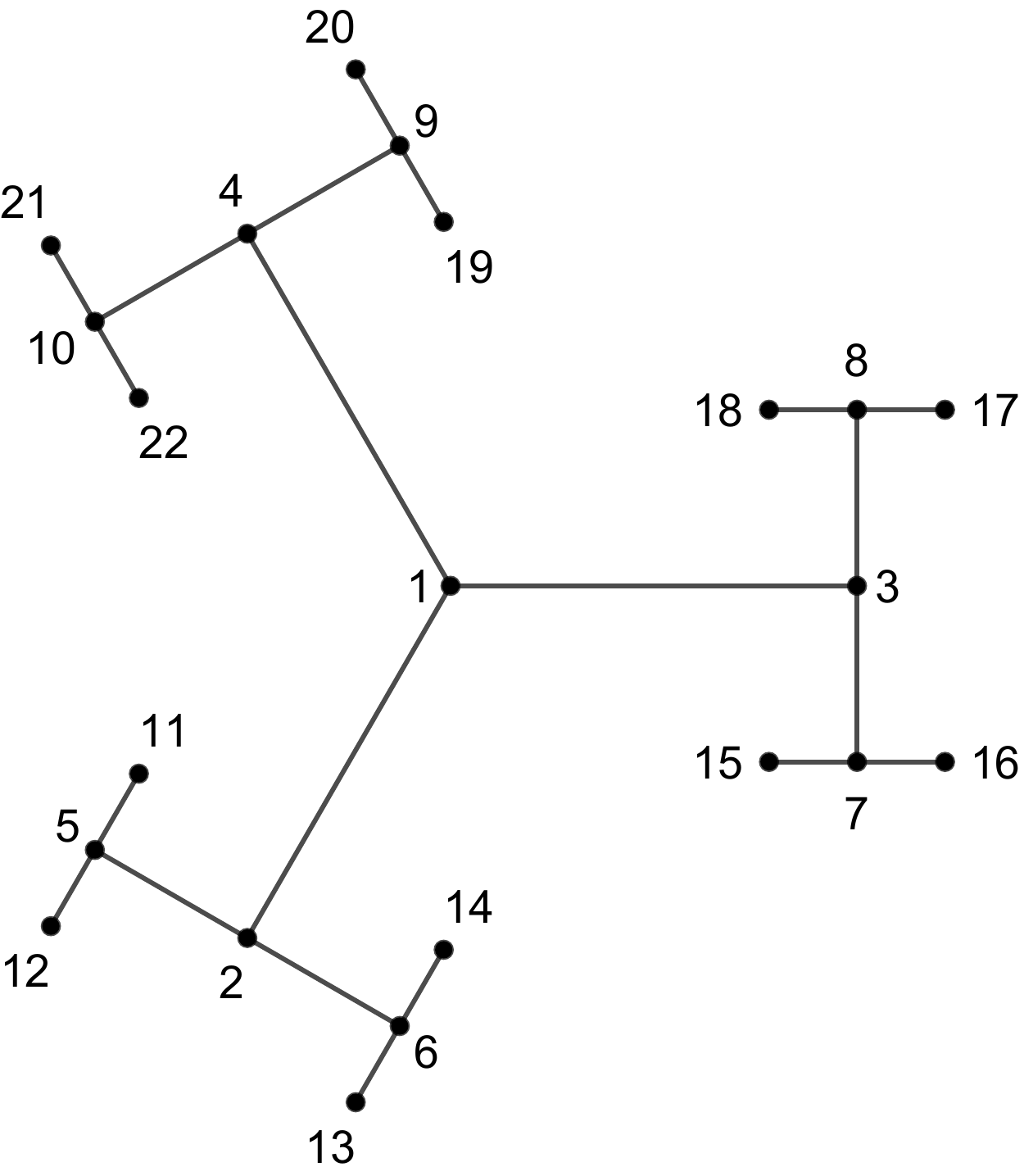}};
  \end{tikzpicture}
    \caption{The spiral-like labelling on the $3$-regular infinite tree (only the first three layers of the infinite tree are shown).}
    \label{fig:tree}
  \end{figure}
\end{center}
When $q=2$, the $2-$regular tree is simply the integer lattice graph on $\mathbb{Z}$, also known as the doubly-infinite path. In that case, the inequality has been extended from $L^2$ to $L^p$ for $p \geq 1$ by Hajaiej \cite{hajaiej2010rearrangement} and then extended by the first author to the weighted setting \cite{gupta2022symmetrization}. Recently, the second author \cite{steinerberger2022discrete} proved (\ref{1.3}) for all $p \geq 1$ and all $q-$regular trees.
The Polya-Szeg\H{o} inequality in $\mathbb{R}^n$ is a generalization of the isoperimetric inequality and relies on all the symmetries of the Euclidean space $\mathbb{R}^n$ and the special role played by the sphere. As such, it is well understood to be a rather special object and deeply tied to the underlying geometry. It is, for example, not at all clear how one would define rearrangement on a generic compact manifold. Likewise, the infinite regular tree is a highly symmetric object. A relevant question is whether something interesting can be said about rearrangements on more `generic' graphs like the lattice graph $\mathbb{Z}^d$.

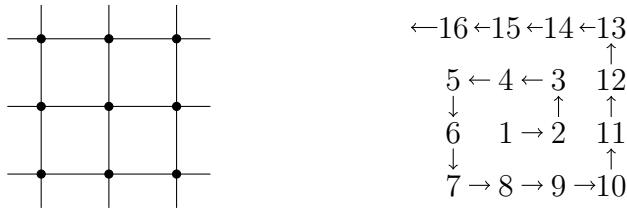
\begin{figure}[h!]
\begin{minipage}[left]{0.35\textwidth}
\begin{center}
\begin{tikzpicture}[scale=0.9]
    \filldraw (0.5, 0) circle (0.06cm);
    \filldraw (1.5, 0) circle (0.06cm);
    \filldraw (2.5, 0) circle (0.06cm);
    \filldraw (0.5, 1) circle (0.06cm);
    \filldraw (1.5, 1) circle (0.06cm);
    \filldraw (2.5, 1) circle (0.06cm);
    \filldraw (0.5, 2) circle (0.06cm);
    \filldraw (1.5, 2) circle (0.06cm);
    \filldraw (2.5, 2) circle (0.06cm);
    \draw  (0,0) -- (3,0);
    \draw  (0,1) -- (3,1);
    \draw  (0,2) -- (3,2);
    \draw  (0.5,-0.5) -- (0.5,2.5);
    \draw  (1.5,-0.5) -- (1.5,2.5);
    \draw  (2.5,-0.5) -- (2.5,2.5);
\end{tikzpicture}
\end{center}
\end{minipage}
\begin{minipage}[right]{0.35\textwidth}
\begin{center}
 \begin{tikzpicture}[scale=1.4]
    \node at (0,0) {1};
    \node at (0.5,0) {2};
    \node at (0.5,0.5) {3};
    \node at (0,0.5) {4};
    \node at (-0.5,0.5) {5};
    \node at (-0.5,0) {6};
    \node at (-0.5,-0.5) {7};
    \node at (0,-0.5) {8};
    \node at (0.5,-0.5) {9};
    \node at (1,-0.5) {10};
    \node at (1,0) {11};
    \node at (1,0.5) {12};
    \node at (1,1) {13};
    \node at (0.5,1) {14};
    \node at (0,1) {15};
    \node at (-0.5,1) {16};
    \draw [->] (1 ,-0.35) -- (1,-0.15);
    \draw [->] (1 ,0.15) -- (1,0.35);
    \draw [->] (1 ,1.3/2) -- (1,1.7/2);
    \draw [->] (1.7/2,2/2) -- (1.4/2,2/2);
    \draw [->] (0.7/2,2/2) -- (0.4/2,2/2);
    \draw [->] (-0.3/2,2/2) -- (-0.6/2,2/2);
    \draw [->] (-1.3/2,2/2) -- (-1.8/2,2/2);
    \draw [->] (0.15, 0) -- (0.35, 0);
    \draw [->] (1/2, 0.3/2) -- (1/2, 0.7/2);
    \draw [->] ( 0.7/2,1/2) -- (0.3/2,1/2);
    \draw [->] ( -0.3/2,1/2) -- (-0.7/2,1/2);
    \draw [->] ( -1/2,0.7/2) -- (-1/2,0.3/2);
    \draw [->] ( -1/2,-0.3/2) -- (-1/2,-0.7/2);
    \draw [->] ( -0.7/2,-1/2) -- (-0.3/2,-1/2);
    \draw [->] ( 0.3/2,-1/2) -- (0.7/2,-1/2);
    \draw [->] ( 1.3/2 ,-1/2) -- (1.7/2,-1/2);
\end{tikzpicture}
\end{center}
\end{minipage}
\caption{Left:  part of the standard lattice graph $(\mathbb{Z}^2, \ell^1)$. Right: spiral labelling on $\Z^2$.}
\label{fig:spiral}
\end{figure}

We will initially restrict ourselves to the case when graph $G$ is the standard lattice graph on $\Z^2$, that is,  two vertices $x, y \in \Z^2$ are connected by an edge if and only if $||x-y||_{\ell^1} =1$ (see Fig. \ref{fig:spiral}). 
Recently, it was shown that a Polya-Szeg\"o inequality of the type \eqref{1.3} cannot hold for the two dimensional lattice graph when $p=2$.
\begin{theorem}[Hajaiej, Han, Hua \cite{hajaiej2022discrete}]\label{thm1.4}
For every rearrangement on the lattice graph $(\mathbb{Z}^2, \ell^1)$ there exists a compactly supported function $f: \Z^2 \rightarrow \mathbb{R}_{\geq 0}$ such that
\begin{equation}\label{2.4}
    \sum_{x \sim y}|f^*(x)-f^*(y)|^2 > \sum_{x \sim y}|f(x)-f(y)|^2.
\end{equation}
\end{theorem}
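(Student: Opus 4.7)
The plan is to reduce to labelings whose nested sets $S_n := \{\eta(1),\dots,\eta(n)\}$ are edge-isoperimetric minimizers in $\Z^2$, and then to construct an explicit two-level step-function counterexample by exploiting a rigid combinatorial obstruction at size $21$.

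First I would apply the hypothetical inequality to a characteristic function $f = \mathbf{1}_A$. Since $|a-b|^2 = |a-b|$ on $\{0,1\}$, the inequality reduces to $|\partial S_n| \leq |\partial A|$ for every finite $A$ with $|A|=n$. Hence either $\{S_n\}$ is a nested sequence of edge-isoperimetric minimizers, or some characteristic function already furnishes the counterexample. I will assume the former.

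Next I would consider the two-level test function
\[
f = M\,\mathbf{1}_{B_2} + \mathbf{1}_{B_1\setminus B_2}, \qquad B_2 \subset B_1 \subset \Z^2 \text{ finite},\quad M>1,
\]
and split $\sum_{x\sim y}|f(x)-f(y)|^2$ according to which level sets contain the endpoints. A short calculation gives
\[
\|\nabla f\|_{L^2}^2 = (M-1)^2 |\partial B_2| + 2(M-1)\,|e(B_2,\Z^2\setminus B_1)| + |\partial B_1|,
\]
with the identical formula for $\|\nabla f^*\|_{L^2}^2$ in terms of $(S_{|B_1|}, S_{|B_2|})$. I would then take $B_1$ to be a translate of the $5\times 5$ square minus its four corners (so $|B_1|=21$, $|\partial B_1|=20$, which is edge-isoperimetric-optimal) and $B_2$ its inner $3\times 3$ (so $|B_2|=9$, $|\partial B_2|=12$). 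Since $B_2$ is interior to $B_1$ and both sets achieve the edge-isoperimetric minimum, the two formulas collapse to
\[
\|\nabla f^*\|_{L^2}^2 - \|\nabla f\|_{L^2}^2 = 2(M-1)\,|e(S_9,\Z^2\setminus S_{21})|,
\]
which is strictly positive for any $M>1$ as soon as $S_9$ is not interior to $S_{21}$.

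The main obstacle is verifying this non-interior property for every admissible labeling, which I plan to handle by a short case analysis on the position of $\eta(1)$. If $\eta(1)$ is not the unique centre of the $3\times 3$ block $S_9$, then $\eta(1)$ has a neighbour outside $S_9$, and taking $B_1$ a $3\times 3$ with $B_2$ its centre vertex already gives a counterexample through the same formula. Otherwise $\eta(1)$ is the centre of $S_9$, so $S_9$ is precisely the $3\times 3$ centered at $\eta(1)$, and its twelve outer neighbours together with $S_9$ make up exactly the surrounding $5\times 5$ minus its four corners, totalling $21$ vertices. But then $S_{16}$ must be a $4\times 4$ containing $S_9$, and one checks directly that each of the four such $4\times 4$'s necessarily contains exactly one corner of the surrounding $5\times 5$. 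Since $S_{21}\supseteq S_{16}$ inherits that corner, while forcing $S_9$ to be interior to $S_{21}$ would require $S_{21}$ to be exactly the $5\times 5$ minus its four corners, we would need $|S_{21}|\geq 22$, contradicting $|S_{21}|=21$. This "corner trap" is the crux of the argument; once established, substitution into the formula above delivers the desired compactly supported function violating the inequality, and everything else reduces to routine checking of edge-isoperimetric boundaries.
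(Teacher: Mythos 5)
Your argument is sound, but note that the paper does not actually prove this statement: Theorem \ref{thm1.4} is quoted from Hajaiej--Han--Hua \cite{hajaiej2022discrete}, so there is no in-paper proof to match; the paper only remarks that a quantitative version exists with a function supported on just five vertices. Your proof is therefore an independent (and correct) route. The three ingredients all check out: (i) testing the putative inequality on indicator functions forces every $S_n=\{\eta(1),\dots,\eta(n)\}$ to be an edge-isoperimetric minimizer, since $|a-b|^2=|a-b|$ on $\{0,1\}$; (ii) the two-level energy identity is verified by the expansion $(M-1)^2+2(M-1)+1=M^2$ on edges from $B_2$ to $\Z^2\setminus B_1$; (iii) the corner-trap at $n=9,16,21$ is a genuine rigidity obstruction, and the difference of energies reduces to $2(M-1)\,e(S_9,\Z^2\setminus S_{21})>0$ exactly as you claim. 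The one place you should be more careful is the phrase ``routine checking of edge-isoperimetric boundaries'': your argument needs not just the extremal \emph{values} $\partial_E^9=12$, $\partial_E^{16}=16$, $\partial_E^{21}=20$, but the \emph{uniqueness} of the extremal shapes for $n=9$ and $n=16$ (that $S_9$ must literally be a $3\times 3$ square and $S_{16}$ a $4\times 4$ square), without which the case analysis on $\eta(1)$ and the ``each $4\times4$ grabs a corner of the $5\times5$'' step do not apply. Uniqueness is true and provable by a short compression/projection argument (for $n=9$: with $k$ occupied rows of sizes $a_i$, the edge count is at most $18-k-\max_i a_i\le 12$ with equality only for three aligned contiguous rows of three), but it is a stronger statement than knowing the boundary values and deserves an explicit lemma. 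Finally, it is worth knowing that the literature gets away with a support of five vertices (the plus-shape is not edge-isoperimetrically optimal for $n=5$, which yields a shorter dichotomy of the same flavor), so your $21$-vertex construction is correct but not the most economical.
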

This was made quantitative by the second author \cite{steinerberger2022discrete}: one can find a function $f$ supported on five vertices such that $\sum_{x \sim y}|f^*(x)-f^*(y)|^2 > 1.01\sum_{x \sim y}|f(x)-f(y)|^2$.
A natural and interesting question arises from this impossibility result.
\begin{quote}
\textbf{Problem.}
Let $1 \leq p \leq \infty$. Among all possible rearrangements on $\mathbb{Z}^2$, which one minimizes the constant $c_p \geq 1$ in the inequality
$$  \forall~f \in L^p(\mathbb{Z}^2) \qquad   \sum_{x \sim y}|f^*(x)-f^*(y)|^p \leq c_p \sum_{x \sim y}|f(x)-f(y)|^p~?$$
\end{quote}

This is motivated by trying to understand how well one can symmetrize functions on $\Z^2$. 
We believe that on the lattice graph there are two natural candidates for an extremal rearrangements: the spiral-like rearrangement and the Wang-Wang rearrangement. 
It was proven by the second author \cite{steinerberger2022discrete} that the spiral-like rearrangement is optimal for $p=1$. It is not difficult to see that the Wang-Wang rearrangement is optimal for $p=\infty$ (essentially  \cite[Theorem 3]{steinerberger2022discrete}). One of our main contributions is a robust framework to prove uniform boundedness for both rearrangements.

\subsection{Spiral Rearrangement.}
This rearrangement consists of writing the values attained by the function in a spiral-like decreasing fashion around a central vertex (see Figs. \ref{fig:spiral} and \ref{fig:example}). We show that the spiral rearrangement is uniformly bounded for $p>1$.

\begin{theorem}\label{thm1.5}
Let $f: \Z^2 \rightarrow \R_{}$ be a function vanishing at infinity and let $f^*$ denote the spiral rearrangement of $f$. Then, for  all $p \geq 1$,
\begin{equation}
    \left\|\nabla f^*\right\|_p \leq 4^{1+1/p} \left\|\nabla f\right\|_p.
\end{equation}
\end{theorem}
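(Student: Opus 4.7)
The plan is to reduce the inequality to a horizontal-slicing comparison combined with the spiral-ball edge-isoperimetric inequality from the $p=1$ case, and then to bound $\|\nabla f^*\|_p^p$ by exploiting the Hamiltonian-path structure of the spiral labelling.

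By a standard density argument I may reduce to $f\ge 0$ finitely supported. Let $a_1\ge a_2\ge\cdots$ be the sorted values of $f$ and set $c_k=a_k-a_{k+1}\ge 0$, $A_k=\{v:f(v)\ge a_k\}$ (so $|A_k|=k$), and $B_k=\eta(\{1,\ldots,k\})$ the $k$-th spiral ball, giving $f=\sum_k c_k\mathbf{1}_{A_k}$ and $f^*=\sum_k c_k\mathbf{1}_{B_k}$. Applying the superadditivity $(\sum_k t_k)^p\ge \sum_k t_k^p$ for $t_k\ge 0$ and $p\ge 1$, edge-by-edge to the level-set decomposition of $|f(x)-f(y)|$ (an edge lies in $\partial_e A_k$ precisely for $k$ in an interval, by nesting of the $A_k$), yields
\[
\|\nabla f\|_p^p \;\ge\; \sum_{k\ge 1} c_k^p\,|\partial_e A_k|,
\]
and combined with the trivial vertex bound $|\partial_e A|\ge 4$ on $\mathbb{Z}^2$ one obtains $\sum_k c_k^p\le \tfrac14\|\nabla f\|_p^p$. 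The key geometric input from the $p=1$ case \cite{steinerberger2022discrete} is the spiral-ball edge-isoperimetric inequality $|\partial_e B_k|\le|\partial_e A|$ for every $A\subset\mathbb{Z}^2$ with $|A|=k$.

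To estimate $\|\nabla f^*\|_p^p$ from above, I split the edges of $\mathbb{Z}^2$ into two classes based on their spiral labels: a \emph{path edge} $\{\eta(i),\eta(i+1)\}$, which is always in $\mathbb{Z}^2$ since the spiral is a Hamiltonian path, contributes $c_i^p$; a \emph{jump edge} $\{\eta(i),\eta(j)\}$ with $j>i+1$ contributes $(\sum_{k=i}^{j-1}c_k)^p$. The path-edge contribution $\sum_i c_i^p$ is already bounded by $\tfrac14\|\nabla f\|_p^p$. For jump edges, the crucial structural fact is that between consecutive spiral layers $m$ and $m+1$ there are $\Theta(m)$ jumps, each of label span $\Theta(m)$, matching the $\mathbb{Z}^2$ edge-isoperimetric profile $|\partial_e B_k|\sim\sqrt{k}$ at $k\sim m^2$. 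Organizing the jump sum layer-by-layer, exchanging summation order, and replacing $|\partial_e B_k|$ by $|\partial_e A_k|$ via the spiral isoperimetric inequality, I aim to establish
\[
\sum_{\text{jump edges}}(a_i-a_j)^p \;\le\; c\cdot 4^p\,\|\nabla f\|_p^p
\]
for an absolute constant $c$. Combining with the path-edge bound and tracking constants will then yield $\|\nabla f^*\|_p^p\le 4^{p+1}\|\nabla f\|_p^p$.

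The main obstacle is the jump-edge estimate when $p$ is large. The naive Jensen inequality $(a_i-a_j)^p\le(j-i)^{p-1}\sum_{k=i}^{j-1}c_k^p$, applied layer-by-layer, produces factors of order $k^{(p-1)/2}$ which cannot be absorbed into $|\partial_e A_k|\gtrsim\sqrt{k}$ alone. The robust framework must therefore exploit the alignment of the $\Theta(\sqrt k)$ spiral jumps at scale $\sqrt k$ with the $\mathbb{Z}^2$ isoperimetric profile across all scales simultaneously, rather than estimating one scale at a time; I expect this simultaneous multi-scale accounting is the essential creative ingredient, and that the factor $4^p$ in the final constant emerges naturally from it.
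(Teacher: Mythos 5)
Your reduction of the path-edge contribution is fine, and your diagnosis of the difficulty is accurate --- but the proposal stops exactly where the proof has to begin. The entire content of the theorem for large $p$ is the jump-edge estimate, and you do not prove it: you state that you ``aim to establish'' it and that you ``expect'' a simultaneous multi-scale accounting to be the essential creative ingredient. As you yourself observe, the route you set up cannot close on its own: a jump edge between spiral layers $m$ and $m+1$ spans $\Theta(m)=\Theta(\sqrt{k})$ consecutive sorted values, so Jensen applied to $(a_i-a_j)^p=\bigl(\sum_{k=i}^{j-1}c_k\bigr)^p$ costs a factor $k^{(p-1)/2}$, while the only lower bound you have extracted from $\|\nabla f\|_p^p$ is $\sum_k c_k^p\,|\partial_e A_k|$ with $|\partial_e A_k|\gtrsim\sqrt{k}$ --- these do not match once $p>3$, and the edge-isoperimetric comparison $|\partial_e B_k|\le|\partial_e A_k|$ only handles single-level functions, not the cross terms. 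So there is a genuine gap, not a routine detail.

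For comparison, the paper resolves precisely this obstruction by a different device: a \emph{universal comparison graph} $G_c$, an infinite tree on $\mathbb{N}$ built from the \emph{vertex}-isoperimetric profile $\partial_V^n$ of $\mathbb{Z}^2$ (via the Wang--Wang nested minimizers), together with a coarea argument showing $\|\nabla f_c\|_{L^p(G_c)}\le\|\nabla f\|_{L^p(\mathbb{Z}^2)}$ for the decreasing rearrangement $f_c$ of the values onto $G_c$. Because the sphere of radius $r$ in $G_c$ has $4r$ vertices, descending only $4$ levels in the tree from label $i$ already reaches a label $\ge i+7\sqrt{i}$, which by the spiral adjacency bound $n\le m+7\sqrt{m}$ dominates any spiral neighbour of $i$. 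Thus each $\mathbb{Z}^2$-edge maps to a tree path of length at most $4$ whose increments are already \emph{aggregated} blocks of $\Theta(\sqrt{i})$ consecutive $c_k$'s controlled by the comparison lemma; Jensen is then applied over only $4$ terms (factor $4^{p-1}$), and a multiplicity count of $16$ gives $4^{p+1}$. That aggregation step --- trading the sorted-sequence increments $c_k$ for tree-edge increments whose $p$-th powers are controlled via vertex isoperimetry --- is the missing ingredient your sketch gestures at but does not supply. If you want to complete your own route, you would need to construct some equivalent of this intermediate object; the level-set identity $\sum_k c_k^p|\partial_e A_k|\le\|\nabla f\|_p^p$ alone is provably too weak.
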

The constant $4^{1+1/p}$ is not sharp: it is known to be $1$ when $p=1$. It would be of interest to obtain improved bounds. A natural question is whether the optimality of the spiral rearrangement for $p=1$ extends beyond this endpoint: is it optimal in some range $p \in (1,p_0)$? The spiral rearrangement is natural because it is induced by a nested sequence of minimizers of the edge-isoperimetric problem (see  \cite{steinerberger2022discrete}).\\
  \vspace{-10pt}

    \begin{center}
  \begin{figure}[h!]
  \begin{tikzpicture}[scale=1]
  \node at (2,0.5) {\includegraphics[width=0.25\textwidth]{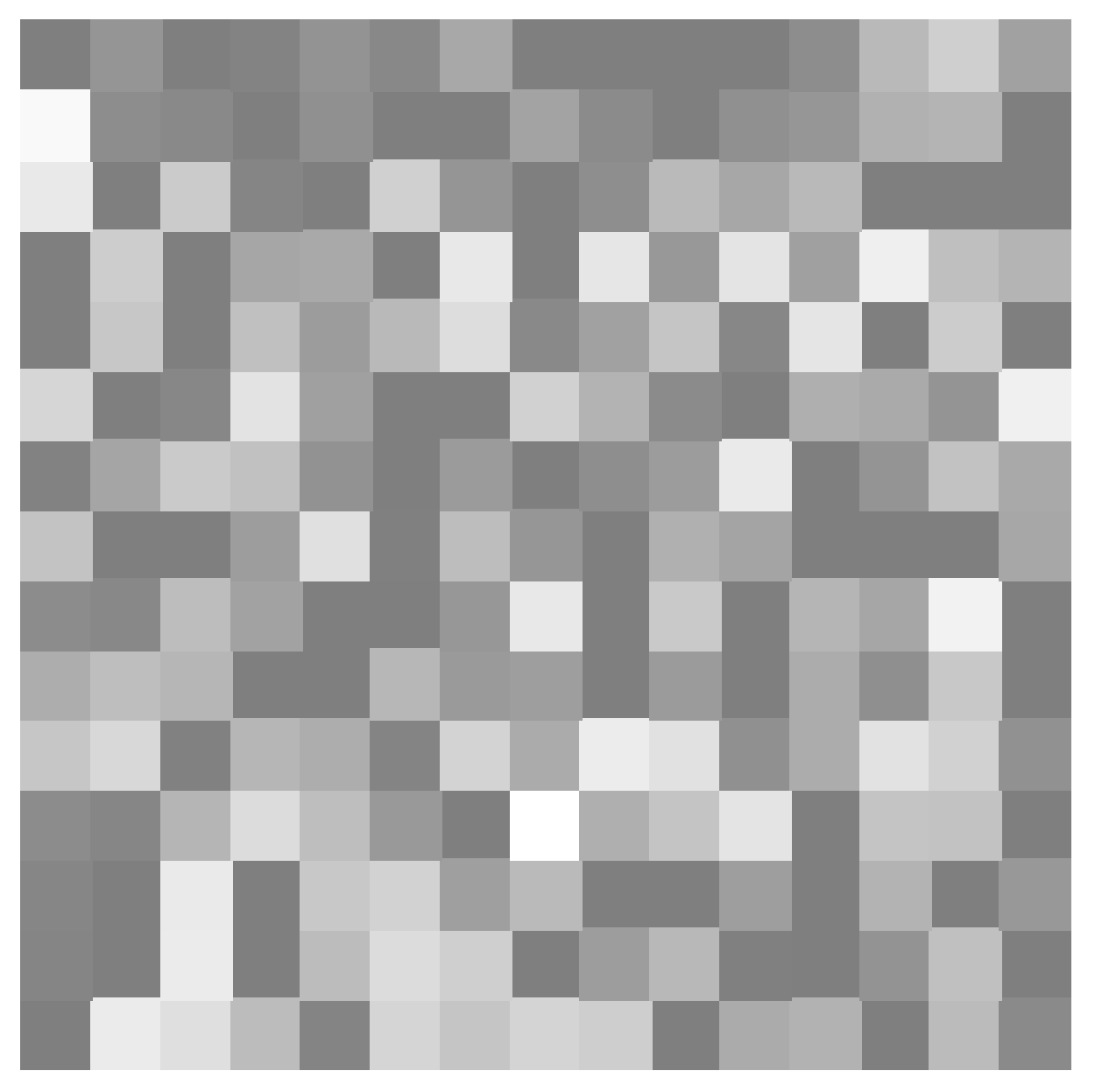}};
    \node at (7,0.5) {\includegraphics[width=0.25\textwidth]{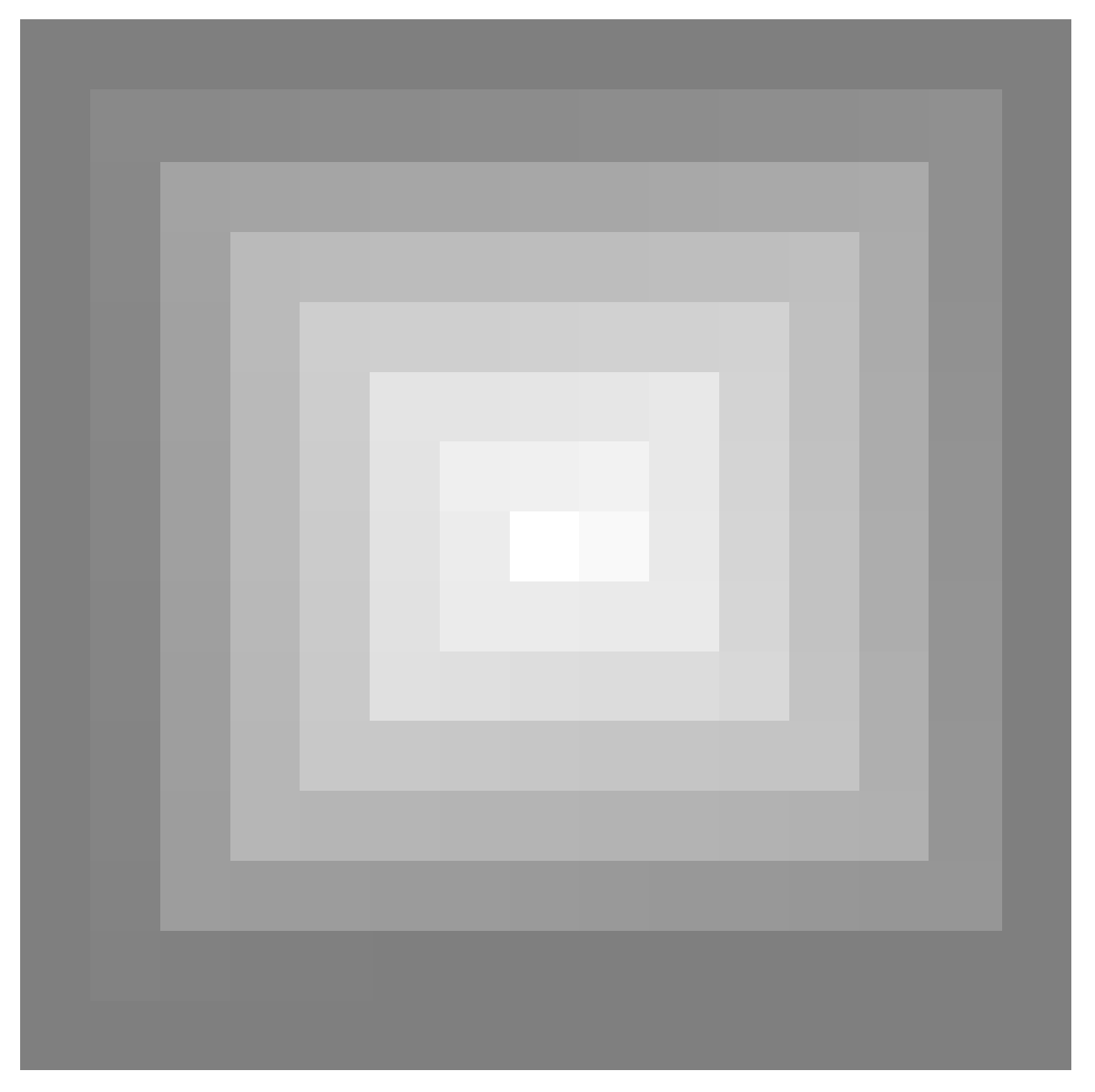}};
    \end{tikzpicture}
    \caption{Left: an example of a function $f:\mathbb{Z}^2 \rightarrow \mathbb{R}_{\geq 0}$ compactly supported around the origin (larger values are brighter). Right: the same function rearranged using the spiral rearrangement.}
      \label{fig:example}
  \end{figure}
  \end{center}
  \vspace{-20pt}

  \subsection{Wang-Wang Rearrangement.}
   The Wang-Wang rearrangement is naturally induced by a construction of nested solutions of the vertex-isoperimetric problem due to Wang \& Wang, see Fig. \ref{fig:smaln}. Taking the first $n$ vertices (with respect to the Wang-Wang label) leads to a set of $n$ vertices in $\mathbb{Z}^2$ that is adjacent to as few vertices as possible among all sets of $n$ vertices. It is not too difficult to see that this rearrangement satisfies $\| \nabla f^*\|_{L^{\infty}} \leq \| \nabla f\|_{L^{\infty}}$; this also follows from \cite[Theorem 3]{steinerberger2022discrete}. We obtain good uniform quantitative bounds for all $1 \leq p \leq \infty$.

\begin{theorem} \label{thm:wang} 
Let $f: \Z^2 \rightarrow \R_{}$ be a function vanishing at infinity and let $f^*$ denote the Wang-Wang rearrangement of $f$. Then, for  all $p \geq 1$,
\begin{equation}
    \left\|\nabla f^*\right\|_p \leq 2^{1/p} \left\|\nabla f\right\|_p.
\end{equation}
\end{theorem}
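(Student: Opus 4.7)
The plan is to combine the vertex-isoperimetric optimality of the Wang-Wang sets with a structural factor-of-$2$ property of those sets, and then lift from $p=1$ to all $p\ge 1$ via a layer-cake of the gradient magnitude. First I would reduce to $f\ge 0$ using the pointwise inequality $\|\nabla |f|\|_p\le \|\nabla f\|_p$ together with the fact that $f^*$ depends only on $|f|$. Writing $u_1\ge u_2\ge \cdots$ for the non-increasing values of $f$ and $\eta$ for the Wang-Wang labeling, so that $f^*(\eta(k))=u_k$ and $S_n^* = \{\eta(1),\ldots,\eta(n)\}$ are the Wang-Wang vertex-isoperimetric minimizers.

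Next I would use the distributional identity
\[
\|\nabla g\|_p^p \;=\; p\int_0^\infty \lambda^{p-1} N_g(\lambda)\,d\lambda, \qquad N_g(\lambda):=\#\bigl\{e\in E:|\nabla g|(e)>\lambda\bigr\},
\]
which follows from $a^p = p\int_0^a \lambda^{p-1}d\lambda$. This reduces the problem to establishing the key inequality
\[
N_{f^*}(\lambda)\le 2\,N_f(\lambda)\qquad \text{for every }\lambda\ge 0, \tag{$\star$}
\]
since integrating $(\star)$ against $p\lambda^{p-1}d\lambda$ yields $\|\nabla f^*\|_p^p \le 2\|\nabla f\|_p^p$, i.e. $\|\nabla f^*\|_p \le 2^{1/p}\|\nabla f\|_p$, which at $p=\infty$ reproduces the known bound $\|\nabla f^*\|_\infty \le \|\nabla f\|_\infty$ as a sanity check.

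To prove $(\star)$ I would use the identity: an edge $e=(x,y)$ satisfies $|\nabla g|(e)>\lambda$ precisely when there is some $s$ with $x\in\{g>s\}$ and $y\notin\{g>s-\lambda\}$. For $g=f^*$ the relevant super-level sets are nested Wang-Wang sets, and two structural facts can then be invoked: the vertex-isoperimetric optimality $|\partial_V^{\mathrm{out}} S_n^*|\le |\partial_V^{\mathrm{out}} A|$ for any $|A|=n$, and the structural fact that each vertex of $\partial_V^{\mathrm{out}} S_n^*$ has at most $2$ neighbors inside $S_n^*$, which gives $|\partial_E^{\mathrm{out}} S_n^*|\le 2|\partial_V^{\mathrm{out}} S_n^*|$. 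Combined with the trivial $|\partial_V^{\mathrm{out}} A|\le |\partial_E^{\mathrm{out}} A|$, these supply $|\partial_E^{\mathrm{out}} S_n^*|\le 2|\partial_E^{\mathrm{out}} A|$, which is exactly $(\star)$ at $\lambda=0$ and from which the $p=1$ inequality $\|\nabla f^*\|_1\le 2\|\nabla f\|_1$ follows immediately.

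The hard part is extending the $\lambda=0$ estimate to positive $\lambda$. For $\lambda>0$ the edges counted by $N_{f^*}(\lambda)$ are no longer just the edges on the boundary of a single Wang-Wang level set, but those spanning a gap of width $\lambda$ between two different nested Wang-Wang sets $\{f^*>s\}\subset \{f^*>s-\lambda\}$. The main obstacle is to show that the Wang-Wang arrangement does not produce more than twice as many such large-jump edges as the arbitrary arrangement $f$; this requires a careful combinatorial edge-count at each pair of levels, propagating the factor-of-$2$ property through the entire nested structure rather than applying it only at one level.
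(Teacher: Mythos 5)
Your reduction to the distributional inequality $N_{f^*}(\lambda)\le 2N_f(\lambda)$ is clean, and the two combinatorial facts you isolate (vertex-isoperimetric optimality of the Wang--Wang sets, and the bound of at most $2$ interior neighbours per boundary vertex, hence $|\partial_E S_n^*|\le 2|\partial_V S_n^*|$) are correct and are indeed the source of the factor $2$. But the proposal has a genuine gap exactly where you flag it: the inequality $(\star)$ is only established at $\lambda=0$ in a form that really proves the $p=1$ case (via the coarea formula, comparing one level set at a time), not the counting inequality $N_{f^*}(0)\le 2N_f(0)$ itself, and nothing is offered for $\lambda>0$. The difficulty is structural, not just technical: $N_g(\lambda)$ is the cardinality of a \emph{union} over $s$ of the edge sets $E\bigl(\{g>s\},\{g\le s-\lambda\}\bigr)$, so a level-by-level comparison of the kind that works at $p=1$ does not integrate up to $(\star)$; an edge can be counted once in $N_f(\lambda)$ while the corresponding mass is spread over many levels. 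As written, the argument proves the theorem only for $p=1$ and $p=\infty$.

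The paper takes a different route that sidesteps $(\star)$ entirely. It builds a \emph{universal comparison tree} $G_c$ on $\N$ from the nested vertex-isoperimetric minimizers and proves, via a modified coarea formula in which each level $s$ contributes $\sum_{y\in\partial_V\{f>s\}}|s-f(y)|^{p-1}$ rather than an edge count above a threshold, that $\|\nabla f_c\|_{L^p(G_c)}\le\|\nabla f\|_{L^p(\Z^2)}$ for the decreasing arrangement $f_c$ of the values of $f$ along $\N$. This handles all $p$ simultaneously because the $(p-1)$-power weight is monotone under moving boundary values toward $s$, which is where the isoperimetric input enters. For the Wang--Wang labelling, $G_c$ turns out to be a spanning subgraph of $\Z^2$ obtained by deleting, for each non-corner vertex $j$ on an $\ell^1$-sphere, one of its two edges to the previous sphere; each deleted edge $(i_2,j)$ is rerouted to the surviving edge $(i_1,j)$ with $i_1<i_2$, which does not decrease $|f^*(i)-f^*(j)|$ and hits each $G_c$-edge at most twice. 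That multiplicity-$2$ edge map is the same combinatorial fact you identified, but embedded in a framework that already carries the $L^p$ comparison. If you want to salvage your approach, you would either need to prove $(\star)$ directly (which looks hard and is not obviously true in the stated form), or replace the counting function $N_g(\lambda)$ by a weighted level-set quantity of the coarea type, at which point you essentially recover the paper's argument.
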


There is no reason to believe that the constant $2^{1/p}$ is sharp if $p<\infty$. Just as it is interesting whether the spiral rearrangement is optimal for $p \in (1, p_0)$ one could wonder whether the Wang-Wang rearrangement is optimal for $p \in (p_1, \infty)$. A particularly interesting question is whether $p_0 = p_1$: this would correspond to the theory of rearrangements on
$\mathbb{Z}^2$ having a particularly simple solution that interpolates between these two rearrangements. Or
are there other, yet unknown, rearrangements that are optimal in the intermediate range?
   
  \begin{figure}[h!]
\begin{minipage}[l]{.35\textwidth}
\begin{tikzpicture}
\node at (-3,0) {};
\node at (0,0) {1};
\node at (0,0.5) {2};
\node at (0.5,0) {3};
\node at (-0.5,0) {4};
\node at (0,-0.5) {5};
\node at (0.5,0.5) {6};
\node at (-0.5,0.5) {7};
\node at (0,1) {8};
\node at (1,0) {9};
\node at (0.5,-0.5) {10};
\node at (-1,0) {11};
\node at (-0.5,-0.5) {12};
\node at (0,-1) {13};
\end{tikzpicture}
\end{minipage}
\begin{minipage}[r]{.35\textwidth}
\begin{center}
\begin{tabular}{c  c  c  c c  c c}
$n$ & 1 & 2 & 3 & 4 & 5 & 6\\
$\partial_V^n$ &  4  &  6 &  7 & 8 & 8 & 9\\
\end{tabular}
\end{center}
\end{minipage} 
\caption{Left: start of the Wang-Wang enumeration (Wang \& Wang \cite{wang}). Right: $\partial_V^n$ , the size minimal vertex-boundary, for small $n$.} 
\label{fig:smaln}
\end{figure}

We note that the proof of Theorem \ref{thm:wang} is relatively concrete and there is some hope of generalizing it to higher dimensions (where a napkin computation would suggest the bound $d^{1/p}$ for $\mathbb{Z}^d$). Making this precise would require some additional combinatorial insights into the Wang-Wang enumeration of $(\mathbb{Z}^d, \ell^1)$.

\subsection{An abstract theorem.}
Our proof of Theorem \ref{thm1.5} is very concrete in each step and illustrates the core of the main argument. Abstracting the proof of Theorem \ref{thm1.5}, we arrive at the following general result.

\begin{theorem}\label{thm:abstract} Let $G=(V,E)$ be an infinite, connected graph with countable vertex set. Assume moreover that $\partial_V^{n+1} \geq \partial_V^{n} \geq \partial_V^1 \geq 2$ and that all vertices have uniformly bounded degree $\deg(v) \leq D$. Suppose now that $v_1, v_2, \dots$ is an enumeration of vertices with the property that, for some universal constant $c > 0$, we have
$$ \partial_V\left( \left\{v_1, \dots, v_n\right\} \right) \subseteq \left\{ v_{n+1}, \dots, v_{n + c \cdot \partial_V^n} \right\}.$$
Then for the rearrangement defined by this enumeration and all $1 \leq p \leq \infty$, we have
$$ \| \nabla f^*\|_{L^p} \leq  (c+1) \cdot D^{1/p} \cdot \| \nabla f\|_{L^p}.$$
\end{theorem}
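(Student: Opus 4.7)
The plan is to combine a sharp pointwise estimate, obtained by iteratively descending through super-level sets of $f$, with a Cavalieri (layer-cake) reformulation that reduces the $L^p$ inequality to a weak-type comparison between the distribution functions of $|\nabla f^*|$ and $|\nabla f|$. Let $a_1 \geq a_2 \geq \cdots \geq 0$ be the values of $|f|$ in non-increasing order, with rank map $\sigma: V \to \mathbb{N}$ so that $|f(\sigma^{-1}(k))| = a_k$, and set $A_n = \sigma^{-1}([1,n])$ and $A_n^* = \{v_1, \dots, v_n\}$. By definition $|\partial_V A_n| \geq \partial_V^n$, while the hypothesis gives $|\partial_V A_n^*| \leq c\partial_V^n$ and hence $|\partial_E A_n^*| \leq Dc\partial_V^n$; in particular, any edge $(v_i, v_j) \in E$ with $i < j$ satisfies $j - i \leq c\partial_V^i$.

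The core step is the pointwise bound: for every edge $(v_i, v_j) \in E$ with $i < j$,
$$
a_i - a_j \leq (c+1)\, \|\nabla f\|_{L^\infty}.
$$
Writing $M = \|\nabla f\|_{L^\infty}$, iterate the thresholds $s_0 = a_i$, $s_{\ell+1} = s_\ell - M$. Since every edge leaving the super-level set $A_{s_\ell} = \{f \geq s_\ell\}$ has drop at most $M$, its vertex-boundary is contained in $A_{s_{\ell+1}} \setminus A_{s_\ell}$, giving $|A_{s_{\ell+1}}| - |A_{s_\ell}| \geq \partial_V^{|A_{s_\ell}|} \geq \partial_V^i$ by monotonicity. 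Iterating $c+1$ times forces $|A_{s_{c+1}}| \geq i + (c+1)\partial_V^i > j$, so $s_{c+1} \leq a_j$ and the claim follows. This already proves the endpoint $p = \infty$.

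For general $p$, I use Cavalieri: $\|\nabla g\|_{L^p}^p = p \int_0^\infty t^{p-1} N_t(g)\, dt$ with $N_t(g) = \#\{e \in E : |\nabla g|_e > t\}$. The theorem then reduces, via the substitution $s = t/(c+1)$, to the weak-type bound $N_t(f^*) \leq D \cdot N_{t/(c+1)}(f)$. Given an edge $(v_i, v_j)$ of $f^*$ with $a_i - a_j > t$, a localized version of the descent argument (with $M$ replaced by $M^{(i,j)} = \max\{|\nabla f|_e : e \in \partial_E A_k,\, k \in [i, j-1]\}$) produces a witness edge $e$ of $f$ at some level $k \in [i, j-1]$ with $|\nabla f|_e > t/(c+1)$. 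Grouping the edges $e^*$ by their higher-index endpoint $v_j$ (of which there are at most $D$ per vertex) yields $N_t(f^*) \leq D \cdot |V_t^*|$, where $V_t^* = \{v_j : \exists\, v_i \in N^-(v_j) \text{ with } a_i - a_j > t\}$; establishing $|V_t^*| \leq N_{t/(c+1)}(f)$ via a canonical injection $v_j \mapsto e(v_j)$ then completes the proof.

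The hardest part will be defining the canonical choice of witness edge so as to ensure the injection $V_t^* \hookrightarrow \{e : |\nabla f|_e > t/(c+1)\}$: a priori, distinct vertices $v_j, v_{j'} \in V_t^*$ might share a single witness edge. This is precisely where the structural hypothesis $\partial_V A_n^* \subseteq \{v_{n+1}, \dots, v_{n+c\partial_V^n}\}$ must be used in conjunction with the uniform degree bound $D$ to disambiguate preimages; in the concrete setting of the spiral rearrangement on $\mathbb{Z}^2$ (Theorem \ref{thm1.5}), this disambiguation is carried out by explicit geometric analysis of the spiral, and the abstraction to Theorem \ref{thm:abstract} amounts to checking that the vertex-isoperimetric hypothesis alone suffices to make the same selection procedure work.
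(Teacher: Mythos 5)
Your $L^\infty$ endpoint is correct and complete: the iterated descent through super-level sets, using $|A_{s_{\ell+1}}|\ge |A_{s_\ell}|+\partial_V^{|A_{s_\ell}|}\ge |A_{s_\ell}|+\partial_V^i$ and the hypothesis $j\le i+c\,\partial_V^i$, is essentially the same combinatorial fact the paper exploits (there phrased as: the vertex $v_i$ has a descendant $v_k$ with $k\ge j$ at distance at most $c+1$ in the universal comparison tree). The Cavalieri reduction is also fine: \emph{if} the weak-type estimate $N_t(f^*)\le D\cdot N_{t/(c+1)}(f)$ held, the theorem would follow with exactly the stated constant. The gap is the step you yourself flag as "the hardest part": the injection $V_t^*\hookrightarrow\{e: |\nabla f|_e>t/(c+1)\}$ is never constructed, and the intermediate inequality $|V_t^*|\le N_{t/(c+1)}(f)$ that it would yield is in fact false under the hypotheses of the theorem. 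Take $G$ an infinite tree whose root $v_1$ has degree $D=4$ while every other vertex has degree $3$ (so $\partial_V^1=3$, $\partial_V^n=n+3$, and the breadth-first enumeration satisfies the containment hypothesis with $c=1$), and let $f=\mathbf{1}_{\{w\}}$ for a non-root vertex $w$. Then $f^*=\mathbf{1}_{\{v_1\}}$, so for $0<t<1$ the set $V_t^*$ consists of the $4$ neighbors of $v_1$, while $N_{t/(c+1)}(f)=\deg(w)=3$: no injection exists. The structural reason is that the bad edges of $f^*$ cluster around a common \emph{lower}-index endpoint, so grouping them by their higher-index endpoint saves nothing (each $v_j$ typically carries a single bad edge), and the factor $D$ spent on that grouping is then unavailable where it is actually needed, namely to absorb the multiplicity with which distinct bad edges $(v_i,v_j)$ with overlapping index ranges select the same witness edge. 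Note also that the weak-type inequality with constants exactly $D$ and $c+1$ is strictly stronger than the family of $L^p$ inequalities; testing it on $f=\mathbf{1}_S$ shows it would force $|\partial_E(\{v_1,\dots,v_n\})|\le D\,|\partial_E(S)|$ for \emph{every} $n$-set $S$, which the hypotheses do not guarantee once $c>1$. So the route as formulated may not be closable at all, not merely unfinished.

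The paper sidesteps this by proving a strong-type bound directly: it first passes to the universal comparison tree $G_c$, where $\|\nabla f_c\|_{L^p(G_c)}\le\|\nabla f\|_{L^p(G)}$ by a coarea/vertex-isoperimetry argument; it then maps each edge $(v_i,v_j)$ of $G$ to the path in $G_c$ from $v_i$ to its smallest descendant $v_k$ with $k\ge j$, shows this path has length at most $c+1$ (your descent estimate, in tree form), observes that each edge of $G_c$ lies on at most $(c+1)D$ such paths (its at most $c+1$ ancestors, each of degree at most $D$ in $G$), and applies Jensen's inequality to the telescoping sum along each path. Jensen costs $(c+1)^{p-1}$ and the multiplicity costs $(c+1)D$, giving $(c+1)^pD$ overall --- the same constant, obtained without any injectivity claim. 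If you wish to salvage your approach, replace the vertex injection by an edge-to-witness-edge assignment and bound its multiplicity; but the natural bound on that multiplicity is $(c+1)D$ rather than $D$, which only yields the weaker constant $(c+1)^{1+1/p}D^{1/p}$.
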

This can be interpreted, at least philosophically, as an extension of  \cite[Theorem 3]{steinerberger2022discrete} from $L^{\infty}$ to $L^p$. One nice byproduct is that it allows us to show boundedness of a suitable (and very large class) of rearrangements in all lattice graphs $(\mathbb{Z}^d, \ell^1)$: 
we quickly define a notion of rearrangement on $\mathbb{Z}^d$ for which Theorem \ref{thm:abstract} implies uniform boundedness of the rearrangement. We say that an enumeration of the lattice points $\mathbb{Z}^d$
respects the $\ell^1-$norm if $\|v_i\|_{\ell^1} < \|v_j\|_{\ell^1}$ implies that $i < j$. This does not specify how to order lattice points with the same $\ell^1$ norm, for these any ordering is admissible. This defines a large number of possible enumerations.
\begin{corollary} \label{thm:cor} There exists a constant $c_d$ such that for every rearrangement respecting the $\ell^1-$norm and all $1 \leq p \leq \infty$
$$  \| \nabla f^*\|_{L^p(\mathbb{Z}^d)} \leq  c_d \cdot \| \nabla f\|_{L^p(\mathbb{Z}^d)}$$
\end{corollary}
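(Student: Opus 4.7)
The strategy is to deduce the corollary as a direct application of Theorem \ref{thm:abstract} to the lattice $(\Z^d, \ell^1)$. The graph is infinite, connected, has countable vertex set, and every vertex has degree exactly $D = 2d$. The base condition $\partial_V^1 = 2d \geq 2$ is immediate, and the monotonicity $\partial_V^{n+1} \geq \partial_V^n$ is known on $\Z^d$ (it is part of the Wang--Wang analysis for $d=2$, and in general can be obtained by deleting an extremal-$\ell^1$-norm vertex from an optimal set of size $n+1$). Thus the substantive task is to verify the boundary-containment hypothesis for an arbitrary $\ell^1$-respecting enumeration $v_1, v_2, \dots$.

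Write $B_r = \{v \in \Z^d : \|v\|_{\ell^1} \leq r\}$ and $S_r = \{v : \|v\|_{\ell^1} = r\}$. For any $n$ there is a unique $r = r(n)$ with $|B_r| \leq n < |B_{r+1}|$, and by the $\ell^1$-respecting property we have the clean structural identity
\[
\{v_1, \dots, v_n\} = B_r \cup A, \qquad A \subseteq S_{r+1}, \quad |A| = n - |B_r|.
\]
Any neighbor of a point in $B_r \cup A$ has $\ell^1$-norm in $\{r, r+1, r+2\}$, so the vertex boundary of this set is contained in $(S_{r+1} \setminus A) \cup S_{r+2}$. Because the enumeration respects $\ell^1$, these are precisely the next consecutive entries of the enumeration after $v_n$. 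Hence the boundary is contained in $\{v_{n+1}, \dots, v_{n+M}\}$ with $M \leq |S_{r+1}| + |S_{r+2}| \leq \kappa_d\, r^{d-1}$ for a dimensional constant $\kappa_d$.

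To convert this into the hypothesis of Theorem \ref{thm:abstract}, I would use the vertex-isoperimetric inequality on $\Z^d$ (a standard consequence of the Loomis--Whitney inequality), which yields $\partial_V^n \geq \alpha_d\, n^{(d-1)/d}$. Combined with $n \geq |B_r| \geq \beta_d\, r^d$, this gives $\partial_V^n \geq \gamma_d\, r^{d-1}$. Choosing $c = \kappa_d / \gamma_d$ therefore makes the boundary-containment hypothesis hold uniformly in $n$ (the finitely many small-$r$ cases can be absorbed by enlarging $c$). Theorem \ref{thm:abstract} then delivers
\[
\|\nabla f^*\|_{L^p(\Z^d)} \leq (c+1)(2d)^{1/p}\, \|\nabla f\|_{L^p(\Z^d)},
\]
and since $(2d)^{1/p} \leq 2d$ uniformly for $p \in [1,\infty]$, one may set $c_d := 2d(c+1)$ to conclude.

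The main obstacle, if one wants a fully self-contained proof, is the monotonicity $\partial_V^{n+1} \geq \partial_V^n$ in general dimension; everything else reduces to the clean geometric observation that any initial segment of an $\ell^1$-respecting enumeration differs from a metric ball by at most one spherical shell, after which the vertex isoperimetric inequality matches the shell size to within a dimensional constant.
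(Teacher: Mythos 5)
Your proof is correct and follows essentially the same route as the paper: both verify the hypotheses of Theorem \ref{thm:abstract} by pairing the lower vertex-isoperimetric bound $\partial_V^n \geq \alpha_d\, n^{(d-1)/d}$ with the observation that the boundary of any initial segment of an $\ell^1$-respecting enumeration sits inside at most two consecutive $\ell^1$-spheres of total size $O(n^{(d-1)/d})$. Your explicit computation with $|S_{r+1}|+|S_{r+2}| \leq \kappa_d r^{d-1}$ is just a more concrete instantiation of the paper's comparison argument, and like the paper you leave the monotonicity $\partial_V^{n+1}\geq\partial_V^n$ in general dimension as an asserted extension of the $d=2$ case.
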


This may look a priori like a strong result in so far as it applies uniformly to all $\ell^p-$spaces of functions as well as a very large number of possible enumerations of the vertex set. The price we pay is a lack of control on the constant $c_d$. It could also be interpreted as saying that our present approach is not fine enough to distinguish optimal rearrangements from merely very good rearrangements. As mentioned above, one could hope for a bound along the lines of $c_d \leq d^{1/p}$ from the Wang-Wang enumeration in higher dimensions and this is an interesting problem. We also mention that one could conceivably extend the definition of enumerations respecting the $\ell^1-$norm to enumerations respecting more general norms $\| \cdot \|_X$ on $\mathbb{R}^d$ without changing too much in the proof of Corollary \label{thm:cor} but we will not pursue this here.

\subsection{Structure of the paper.} 
The rest of the paper is structured as follows: Theorem \ref{thm1.5} is, in a suitable sense, the most `generic' application of our framework (explicit constants, no magic simplifications): we will spend most of the paper building a framework that allows us to prove it. The other results tend to follow from the same framework and, using the framework, have shorter proofs. More concretely, the remainder of the paper is structured as follows.

\begin{enumerate}
\item In \S \ref{sec:comparsiongraph}, we construct what we call the \emph{universal comparison graph} $G_c$. This graph is an infinite tree whose structure is built from the structure of solutions of the (vertex)-isoperimetric problem on $G$. There is way of mapping functions $f$ on $G$ to functions on $G_c$ in a way that decreases the $L^p-$norm of their gradient. 
\item \S \ref{sec:specific} discusses the universal comparison graph attached to $(\mathbb{Z}^2, \ell^1)$.
\item \S \ref{sec:mapping} is concerned with a mapping of edges in $(\mathbb{Z}^2, \ell^1)$ to short paths in its universal comparison graph $G_c$. We show that while this cannot be done bijectively, there is a mapping $\Psi$ of edges from the lattice graph to the comparison graph that has bounded multiplicity (in the sense of the cardinality of the pre-image of an edge in $G_c$ being uniformly bounded by a universal constant).
\item \S  \ref{sec:mainproof}  uses the results from the previous sections to prove Theorem \ref{thm1.5}.
\item \S \ref{sec:wang} is dedicated to proving Theorem \ref{thm:wang}. The proof can be seen as a particularly simple application of the framework
developed above (for example, $\Psi$ is mapping edges to edges instead of mapping edges to paths).
\item \S \ref{sec:last} gives a proof of Theorem \ref{thm:abstract} and Corollary \ref{thm:cor}. The proof of Theorem \ref{thm:abstract} is essentially identical to the proof of  Theorem \ref{thm1.5} and relies heavily on \S 2 while replacing \S \ref{sec:specific}  and \S \ref{sec:mapping} with more abstract conditions. Corollary \ref{thm:cor} follows quickly from Theorem \ref{thm:abstract} (at the cost of giving no control on the constant).
\end{enumerate}

\section{The Universal Comparison graph}\label{sec:comparsiongraph}
The purpose of this section is to introduce the universal comparison graph: it appears to be a useful concept when studying rearrangements on graphs ( `universal' refers it being independent of the function and the rearrangement, it only depends on the graph itself.). In this section, we
develop a bit of abstract theory for general graphs with Lemma \ref{lem2.5} being the main goal. In the next section we will specify the behavior of the universal comparison graph when $G=(\mathbb{Z}^2, \ell^1)$, in that case the universal comparison graph is a completely explicit infinite tree without leaves (see Fig. \ref{fig:ucg0}).

\subsection{Definition}  Before introducing the universal comparison graph, we quickly introduce some of the relevant concepts (none of which are new).
Let $G=(V, E)$ be a connected graph with countably infinite vertex set $V$. For any set of vertices $X \subseteq V$ the \emph{vertex boundary} of $X$ is defined as
\begin{align*}
    \partial_V(X) := \{z \in V\setminus X: x \sim z,   \text{for some}\hspace{3pt} x \in X \},
\end{align*}
and $|\partial_V(X)|$ is the \emph{vertex perimeter} of set $X$. We define \emph{isoperimetric number} $\partial_V^n$ as the solution of vertex isoperimetric problem on $G$ among all sets with $n$ vertices 
\begin{equation}
    \partial_V^n := \min_{X \subseteq V, \\ |X|=n}  |\partial_V(X)|,
\end{equation}
for $n \in \N$. We note that on $(\mathbb{Z}^2, \ell^1)$ the vertex-isoperimetric problem is completely solved, we refer to Wang \& Wang \cite{wang}. We also refer to 
\cite{boll, boll2, gar, harper, lindsey} and references therein for related results.
We are now ready to define the main object of this section. 

\begin{definition}\label{def2.1}
Let $G=(V, E)$ be a graph, let $\partial_V^n$ be its isoperimetric number and assume that $\partial_V^{n+1} \geq \partial_V^n$ for $n \geq 1$. Then the \emph{universal comparison graph} $G_c =(\N, E_c)$ is the unique graph on $\N$ satisfying
\begin{equation}\label{2.2}
    \{i \in \N: i > n, i \sim n\} = \{i \in \N: (n-1)+\partial_V^{n-1} < i \leq n+\partial_V^n\},
\end{equation}
for all $n \geq 1$ with the convention $\partial_V^0 = 1$. 
\end{definition}

We quickly explain the construction for the graph $(\mathbb{Z}^2, \ell^1)$ by appealing to results of Wang \& Wang \cite{wang}.
They construct a permutation of the vertices such that the first $n$ vertices corresponding to that permutation minimize the vertex perimeter (the number of adjacent vertices) among all subsets of size $n$ uniformly in $n$.  
Note that this is something special: for most graphs one cannot expect the solutions of the vertex-isoperimetric problem to be
nested, one would expect that they vary a great deal depending on the number of vertices under consideration. Appealing to the
definition above, we see that the universal comparison graph is going to be a graph on $\mathbb{N} = \left\{1,2,3,\dots,\right\}$. Plugging
in $n = 1$, we see that $1$ is adjacent to $\left\{2,3,4,5\right\}$. Plugging in $n=2$, we see that the vertex $2$ is adjacent to $\left\{5 < i \leq 8\right\}$.
The pattern continues, Figs. \ref{fig:ucg0} and \ref{fig:ucg} show the first few levels of the universal comparison tree.

\begin{lemma}\label{lem2.2}
The universal comparison graph $G_c$ is well defined, that is, there is exactly one graph on $\N$ satisfying \eqref{2.2}.
\end{lemma}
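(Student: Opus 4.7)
The idea is that \eqref{2.2} prescribes, for every vertex $n \in \N$, exactly the set of its \emph{upward neighbors} (those neighbors larger than $n$). Because every undirected edge $\{n,m\}$ has a unique smaller endpoint, specifying the upward neighbors at every vertex specifies the entire edge set, so both existence and uniqueness reduce to a purely combinatorial check on the prescribed sets.

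Concretely, I would introduce the intervals
$$I_n := \{i \in \N: (n-1)+\partial_V^{n-1} < i \leq n+\partial_V^n\}, \qquad n \geq 1,$$
with the convention $\partial_V^0 = 1$, and first verify that $\{I_n\}_{n \geq 1}$ partitions $\N_{\geq 2}$. Non-emptiness follows from $|I_n| = 1 + \partial_V^n - \partial_V^{n-1} \geq 1$ via the monotonicity hypothesis $\partial_V^n \geq \partial_V^{n-1}$. Consecutive intervals abut exactly: the largest element of $I_n$ is $n + \partial_V^n$, while the smallest element of $I_{n+1}$ is $(n+1) + \partial_V^n = n + \partial_V^n + 1$. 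The boundary case $n = 1$ uses the convention $\partial_V^0 = 1$ to start $I_1$ at $2$, and $n + \partial_V^n \to \infty$ as $n \to \infty$ ensures that the intervals exhaust $\N_{\geq 2}$.

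For existence, I would then define $E_c := \bigl\{ \{n,i\}: n \geq 1,~ i \in I_n \bigr\}$; by construction the upward-neighbor set of $n$ in $G_c = (\N, E_c)$ equals $I_n$, so \eqref{2.2} holds. For uniqueness, suppose $G' = (\N, E')$ also satisfies \eqref{2.2}. Then for any $n < m$, the membership $\{n,m\} \in E'$ is equivalent to $m$ being an upward neighbor of $n$ in $G'$, which by \eqref{2.2} is equivalent to $m \in I_n$; hence $E' = E_c$. The only real obstacle is bookkeeping: one has to track the $\partial_V^0 = 1$ convention and the strict-versus-weak inequalities in the definition of $I_n$ carefully to confirm that the intervals tile $\N_{\geq 2}$ without overlap or gap.
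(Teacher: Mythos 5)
Your proposal is correct and follows essentially the same route as the paper: both arguments reduce to checking, via the monotonicity of $\partial_V^n$ (equivalently, the strict monotonicity of $n+\partial_V^n$), that the prescribed upward-neighbor intervals $I_n$ are nonempty and tile $\N_{\geq 2}$ without overlap or gap. Your version is slightly more complete in that it spells out the existence half (the paper's proof addresses only uniqueness, phrased as each $n\geq 2$ having exactly one smaller neighbor, which is the same partition fact), but the underlying computation is identical.
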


\begin{proof}
Let $n \in \N$, then condition \eqref{2.2} fixes all neighbours of $n$ which are greater than $n$. Next, we prove that \eqref{2.2} also fixes neighbours of $n$ which are smaller than $n$, thereby proving the uniqueness. In particular, we prove that for each vertex $n \geq 2$, there exists exactly one vertex $i <n$ such that $i \sim n$. Since $m+\partial_V^m$ is a strictly increasing sequence of integers, for each $n \geq 2$, there exists $1 \leq i<n$ such that $(i-1)+\partial_V^i < n \leq i+\partial_V^i$. Then condition \eqref{2.2} imply that $i \sim n$ and the same argument, monotonicity of $m+\partial_V^m$, establishes uniqueness. This proves that each vertex $n \geq 2$ has exactly one neighbour smaller than $n$.  
\end{proof}

\begin{center}
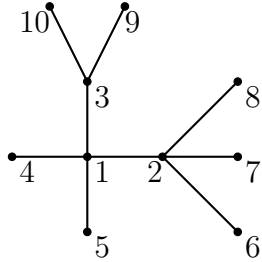
\begin{figure}[h!]
\begin{tikzpicture}
\filldraw (0,0) circle (0.05cm);
\node at (0.2, -0.2) {1};
\filldraw (1,0) circle (0.05cm);
\node at (0.9, -0.2) {2};
\filldraw (0,1) circle (0.05cm);
\node at (0.2, 1-0.2) {3};
\filldraw (-1,0) circle (0.05cm);
\node at (-1+0.2, -0.2) {4};
\filldraw (0,-1) circle (0.05cm);
\node at (0.2, -1-0.2) {5};
\draw [thick] (-1,0) -- (1,0);
\draw [thick] (0,-1) -- (0,1);
\filldraw (2,1) circle (0.05cm);
\filldraw (2,0) circle (0.05cm);
\filldraw (2,-1) circle (0.05cm);
\node at (2.2, 1-0.2) {8};
\node at (2.2, 0-0.2) {7};
\node at (2.2, -1-0.2) {6};
\draw [thick] (1,0) -- (2,1);
\draw [thick] (1,0) -- (2,0);
\draw [thick] (1,0) -- (2,-1);
\filldraw (0.5,2) circle (0.05cm);
\filldraw (-0.5,2) circle (0.05cm);
\draw [thick] (0,1) -- (0.5, 2);
\draw [thick] (0,1) -- (-0.5, 2);
\node at (0.6, 1.8) {9};
\node at (-0.7, 1.8) {10};
\end{tikzpicture}
\caption{Initial segment of the universal comparison graph for $(\mathbb{Z}^2, \ell^1)$.}
\label{fig:ucg0}
\end{figure}
\end{center}
\vspace{-0pt}

\begin{lemma}\label{lem2.3}
The universal comparison graph $G_c$ is a tree. Furthermore if $\partial_V^1 \geq 2$ then $G_c$ has no leaves, that is, each vertex has degree at least two.
\end{lemma}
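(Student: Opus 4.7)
The plan is to leverage the fact, established in Lemma \ref{lem2.2}, that every vertex $n \geq 2$ in $G_c$ has \emph{exactly one} neighbor smaller than itself; call this neighbor the parent $p(n)$ of $n$. This gives $G_c$ the structure of a rooted graph with root $1$, from which both the tree property and the absence of leaves follow cleanly from the definition \eqref{2.2}.

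First I would show that $G_c$ is acyclic. Suppose, toward a contradiction, that $v_1 \sim v_2 \sim \cdots \sim v_k \sim v_1$ is a cycle (with $k \geq 3$ and all $v_i$ distinct). Let $v_j$ carry the largest label among the $v_i$. Then both $v_{j-1}$ and $v_{j+1}$ (indices mod $k$) are adjacent to $v_j$ and strictly smaller than $v_j$. But Lemma \ref{lem2.2} says $v_j$ has exactly one neighbor smaller than itself, forcing $v_{j-1} = v_{j+1}$, which contradicts $k \geq 3$. Next I would prove connectedness by iterating the parent map: starting from any $n \geq 2$, the sequence $n > p(n) > p(p(n)) > \cdots$ is a strictly decreasing sequence of positive integers and hence terminates at $1$, producing a path in $G_c$ from $n$ to the root. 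Combined with acyclicity, this makes $G_c$ a tree.

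For the no-leaves statement under the assumption $\partial_V^1 \geq 2$, I would directly compute degrees from \eqref{2.2}. The vertex $1$ has no parent, but its upward neighbors are $\{i : 1 < i \leq 1 + \partial_V^1\}$, a set of size $\partial_V^1 \geq 2$. For $n \geq 2$, the parent contributes $1$ to the degree, and the number of upward neighbors equals $(n + \partial_V^n) - ((n-1) + \partial_V^{n-1}) = 1 + (\partial_V^n - \partial_V^{n-1}) \geq 1$, by the monotonicity hypothesis $\partial_V^n \geq \partial_V^{n-1}$ built into Definition \ref{def2.1}. Thus every vertex has degree at least $2$. I do not expect a genuine obstacle here; the only delicate point is formulating the cycle argument in an infinite graph, which is resolved by noting that any cycle is finite, so one may always choose a maximum-indexed vertex within it.
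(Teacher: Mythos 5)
Your proof is correct and follows essentially the same route as the paper: acyclicity via the maximum-labelled vertex of a putative cycle having two smaller neighbours (contradicting the uniqueness of the parent from Lemma \ref{lem2.2}), connectedness via the descending parent chain to the root $1$, and the degree count for the no-leaves claim read off directly from \eqref{2.2} together with the monotonicity of $m + \partial_V^m$. No issues.
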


\begin{proof}
First we prove that $G_c$ is connected. This follows from the argument above showing that each vertex $n$ is adjacent to exactly one vertex smaller than $n$. This induces a path to the vertex 1. Since each vertex is connected to the vertex 1, the graph is connected.
In Lemma \ref{lem2.2} we proved that each vertex $n \geq 2$ has exactly one neighbour smaller than $n$. This immediately implies that $G_c$ has no cycles: for any cycle, the largest vertex in the cycle say $l$ will have at least two neighbours which are smaller than $l$, which is not possible. Therefore $G_c$ is a tree.
Let $n \geq 2$, since $m+\partial_V^m$ is a strictly increasing sequence, condition \eqref{2.2} implies that $|\{i \in \N: i > n, i \sim n \}| \geq 1$. From Lemma \ref{lem2.2} we also know that each vertex $n \geq 2$ has exactly one neighbour smaller that $n$. This proves that degree of vertex $n$ is at least two, for $n \geq 2$. It is clear that  the degree of vertex $1$ is $\partial_V^1 \geq 2$. Therefore $G_c$ has no leaves.  
\end{proof}

In the next lemma, we compute the vertex boundary of first $n$ vertices of $G_c$.
\begin{lemma}\label{lem2.4}
Let $G$ be a graph and $G_c$ be its universal comparison graph. Then 
\begin{equation}\label{2.3}
    \partial_V(\{1,2,...,n\}) = \{n+1, n+2,..., n+\partial_V^n\}.
\end{equation}
\end{lemma}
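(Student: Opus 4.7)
The plan is to proceed by induction on $n$, exploiting the one-to-one dictionary between vertices $\geq 2$ and their unique smaller neighbour in $G_c$ established in Lemma \ref{lem2.2}.

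For the base case $n=1$, setting $n=1$ in the defining relation \eqref{2.2} and using the convention $\partial_V^0 = 1$ immediately identifies the neighbours of vertex $1$ greater than $1$ as $\{2, \ldots, 1 + \partial_V^1\}$; since vertex $1$ has no smaller neighbour, this is the full boundary $\partial_V(\{1\})$, matching the claim at $n=1$.

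For the inductive step, write $A_n = \{1, \ldots, n\}$. Passing from $A_{n-1}$ to $A_n$ removes vertex $n$ from the boundary, and the only way a new vertex can enter the boundary is by being a neighbour of $n$ lying outside $A_n$. Lemma \ref{lem2.2} tells us that the unique neighbour of $n$ smaller than $n$ automatically belongs to $A_{n-1}$, so no index below $n$ is ever added. This gives the clean decomposition
\[
  \partial_V(A_n) \;=\; \bigl(\partial_V(A_{n-1}) \setminus \{n\}\bigr) \;\cup\; \{\, i > n : i \sim n \,\}.
\]
Applying the inductive hypothesis yields $\partial_V(A_{n-1}) \setminus \{n\} = \{n+1, \ldots, (n-1) + \partial_V^{n-1}\}$, while \eqref{2.2} identifies $\{i > n : i \sim n\} = \{(n-1) + \partial_V^{n-1} + 1, \ldots, n + \partial_V^n\}$. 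These two blocks of consecutive integers meet end-to-end, so their union is precisely $\{n+1, \ldots, n + \partial_V^n\}$, completing the induction.

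I do not anticipate a genuine obstacle here; the only delicate point is the index bookkeeping between $(n-1) + \partial_V^{n-1}$ and $n + \partial_V^{n-1}$, which is exactly the single-unit shift built into \eqref{2.2}. The monotonicity hypothesis $\partial_V^n \geq \partial_V^{n-1}$ ensures that the ``new'' block is non-empty (collapsing to a single vertex only when equality holds), and $\partial_V^{n-1} \geq 1$ keeps the new vertices strictly above $n$, which is all that is required.
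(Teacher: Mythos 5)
Your proof is correct and follows essentially the same route as the paper: induction on $n$ via the decomposition $\partial_V(\{1,\dots,n\}) = \bigl(\partial_V(\{1,\dots,n-1\})\setminus\{n\}\bigr) \cup \{i>n : i\sim n\}$, with the base case read off from \eqref{2.2}. Your version just spells out the index bookkeeping a bit more explicitly than the paper does.
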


\begin{proof}
We prove the result using induction on $n$. Let us assume that \eqref{2.3} holds true for $n \geq 1$. It is easy to see  that
$$\partial_V(\{1,2,...,n, n+1\}) = \partial_V(\{1,2,...,n\})\setminus\{n+1\} \cup \{i \in \N: i > (n+1), i \sim n+1\}.$$
Above identity along with \eqref{2.2} proves \eqref{2.3} for $n+1$. Identity \eqref{2.3} for $n=1$ follows from \eqref{2.2},
$$ \partial_V(\{1\}) = \{i \in \N: i >1, i \sim 1\} = \{2,..,1+\partial_V^1\}.$$
\end{proof}

\subsection{Main Lemma.} We are now ready to prove the main result of this section. Let $f: V(G) \rightarrow \R_{\geq 0}$ be a function vanishing at infinity. We define a function $f_c: V_c \rightarrow \R$ on the vertices of its universal comparison graph $G_c = (V_c, E_c)$ as 
$$ f_c(k) := k^{th} \hspace{3pt} \text{largest value attained by} \hspace{3pt} |f|.$$
We will refer to $f_c$ as the \emph{comparison function} of $f$. We will now show that the comparison function has a smaller gradient in the sense
of   
$$ \left\| \nabla f_c\right\|_{L^p(G_c)} \leq    \left\| \nabla f\right\|_{L^p(G)}.$$ 
\begin{lemma}[Comparison Lemma]\label{lem2.5}
Let $G= (V,E)$ be a graph and $f: V \rightarrow \R_{\geq 0}$ be a function vanishing at infinity. Then for $p \geq 1$,
\begin{equation}
    \sum_{(x, y) \in E_c} |f_c(x)-f_c(y)|^p \leq \sum_{(x,y) \in E} |f(x)-f(y)|^p.
\end{equation}
\end{lemma}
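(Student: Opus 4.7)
The approach is to exhibit, for every edge of $G_c$, a distinct edge of $G$ on which the gradient of $f$ dominates the gradient of $f_c$; summing $p$-th powers then yields the lemma. First I would reduce (by truncation, small perturbation, and monotone convergence) to the case where $|f|$ takes finitely many distinct positive values $v_1 > v_2 > \cdots > v_N > 0$ at vertices $u_1, \ldots, u_N \in V$, with the remaining vertices enumerated arbitrarily. Set $V_k := \{u_1, \ldots, u_k\}$, $V_k^* := \{1, \ldots, k\} \subset V(G_c)$, $\Delta_k := v_k - v_{k+1}$ (with $v_{N+1} := 0$), and write $r(x)$ for the rank of $x \in V$ in this enumeration. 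Because the level sets are nested, every edge $e = (x, y) \in E$ with $a := r(x) \le r(y) =: b$ satisfies
\[
|\nabla f(e)| = \sum_{k=a}^{b-1} \Delta_k,
\]
a sum of non-negative terms; the analogous identity holds in $G_c$.

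The heart of the argument is to construct an injection $\phi: E_c \to E$ such that if $\phi(m, n) = (x, y)$ with $r(x) \le r(y)$, then $r(x) \le m$ and $r(y) \ge n$. The interval-sum identity then gives $|\nabla f(\phi(m, n))| \ge |\nabla f_c(m, n)|$, whence
\[
\sum_{e' \in E_c} |\nabla f_c(e')|^p \le \sum_{e' \in E_c} |\nabla f(\phi(e'))|^p \le \sum_{e \in E} |\nabla f(e)|^p.
\]
The key numerical input for building $\phi$ is a pair of isoperimetric counts: by Lemma \ref{lem2.4} together with the tree structure from Lemma \ref{lem2.3}, every boundary vertex of $V_k^*$ in $G_c$ has a unique neighbor in $V_k^*$ (its parent), so the edge boundary satisfies $|\partial_E^{G_c} V_k^*| = \partial_V^k$; while in $G$ one has only the weaker bound $|\partial_E V_k| \ge |\partial_V V_k| \ge \partial_V^k$, because each vertex of $\partial_V V_k$ supports at least one edge of $\partial_E V_k$ in a simple graph.

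To produce $\phi$, I would verify Hall's marriage condition on the bipartite compatibility graph (left part $E_c$, right part $E$, with the rank-containment adjacency), using a level-by-level accounting that invokes the assumed monotonicity $\partial_V^{k+1} \ge \partial_V^k$. The chief obstacle is that the intervals $[m, n-1]$ carried by edges of $E_c$ are not laminar — on $(\Z^2, \ell^1)$ one already finds intervals like $[1, 4]$ and $[2, 5]$ that overlap without one containing the other — so the Hall condition cannot be checked by a naive local comparison and must be pooled across the full range of ranks appearing in any given subset of $E_c$. The tree structure of $G_c$ is essential here: it forces each demand in the matching to be attached to a unique parent–child edge, which is what prevents the demand at any level from exceeding the supply $\partial_V^k$ guaranteed on the $G$-side.
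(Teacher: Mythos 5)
There is a genuine gap: the central claim of your plan --- an injection $\phi : E_c \to E$ with $|\nabla f(\phi(e'))| \geq |\nabla f_c(e')|$ for every $e' \in E_c$ --- is false, so the Hall verification you defer cannot be completed. Such an injection would force the majorization
\begin{equation*}
\#\{e' \in E_c : |\nabla f_c(e')| \geq t\} \;\leq\; \#\{e \in E : |\nabla f(e)| \geq t\} \qquad \text{for every } t>0,
\end{equation*}
and this already fails on $(\Z^2,\ell^1)$. Take $f$ supported on the path $u_1=(0,0)$, $u_2=(1,0)$, $u_3=(2,0)$ with $f(u_1)=3$, $f(u_2)=2$, $f(u_3)=1$, and $f=0$ elsewhere. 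The multiset of nonzero gradients of $f$ on $\Z^2$ is $\{3,3,3,2,2,1,1,1,1,1\}$ (three edges from $u_1$ to zeros, two from $u_2$ to zeros, three from $u_3$ to zeros, and the two path edges), whereas $f_c(1)=3$, $f_c(2)=2$, $f_c(3)=1$ gives on $G_c$ the multiset $\{3,3,2,2,2,2,1,1,1\}$ (edges $(1,4),(1,5)$; then $(1,3),(2,6),(2,7),(2,8)$; then $(1,2),(3,9),(3,10)$). At threshold $t=2$ the comparison graph has \emph{six} edges with gradient $\geq 2$ but the lattice has only \emph{five}, so no injection of the required kind exists. Concretely, Hall's condition fails for $S=\{(1,3),(1,4),(1,5),(2,6),(2,7),(2,8)\}$: even with the most generous compatibility rule ($f(x)\geq f_c(m)$ and $f(y)\leq f_c(n)$, which is what you need to handle the zero set anyway), $N(S)$ consists of exactly the five edges from $\{u_1,u_2\}$ into the zero set. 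The Lemma itself survives this example, since $2\cdot 3^p+4\cdot 2^p+3 \leq 3\cdot 3^p+2\cdot 2^p+5$ for $p\geq 1$, but it does not survive edge by edge; the non-laminarity you flag as "the chief obstacle" is fatal, not technical.

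Your preliminary observations (reduction to finitely many values, $|\partial_E^{G_c} V_k^*|=\partial_V^k \leq |\partial_E V_k|$, and the "rectangular" Hall checks for sets of $E_c$-edges crossing a fixed pair of levels) are all correct; the problem is that single-level supply bounds cannot control demands that span several levels simultaneously. The paper's proof sidesteps the edge-by-edge comparison entirely: it writes $\|\nabla f\|_{p}^{p}= p\int_0^\infty \sum_{e\in\partial_E\{f>s\}}|\nabla \min(f,s)(e)|^{p-1}\,ds$ (the modified coarea formula) and compares the two graphs one level $s$ at a time. The truncation $\min(f,s)$ is the key device: the level-$s$ integrand depends only on the values of $f$ attained on $\partial_V(\{f>s\})$, so the vertex-isoperimetric bound $|\partial_V(\{f>s\})|\geq \partial_V^{i}$, together with the fact that $f_c(i+1),\dots,f_c(i+\partial_V^i)$ are the largest values available outside the superlevel set and hence minimize $\sum (s-\cdot)^{p-1}$, yields the inequality for each fixed $s$; the cross-level correlations that destroy your matching are then absorbed by the integration over $s$. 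If you want to keep a combinatorial flavor, you would have to match (edge, level)-pairs rather than edges, which is essentially what the coarea identity does for you.
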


 The proof is based on a coarea formula already used in \cite{steinerberger2022discrete} which we quickly explain for the convenience of the reader.
 The coarea formula on graphs is
$$ \| \nabla f\|_{L^p}^p =  \int_{0}^{\infty} \int_{ \partial_{E} \left\{ f \geq s \right\} } \left| \nabla f\right|^{p-1} dx ds,$$
where $ \partial_{E} \left\{ f \geq s \right\}$ is the set of edges that connect the vertex sets $\left\{ v \in V: f(v) \geq s \right\}$ and
$\left\{ v \in V: f(v) < s \right\}$. It is easily derived: the idea being each edge contributes $|f(v) - f(w)|^p$ to the
left-hand side and $|f(v) - f(w)|^{p-1}$ over an interval of length $|f(v) - f(w)|$ to the right-hand side. We will now use a small modification
of the idea: the advantage of this new formulation is that the values in $\left\{ f \geq s \right\}$ no longer show
up in the inner integral which is solely determined by $s$ and the values outside.

\begin{lemma}[Modified Coarea Formula, see \cite{steinerberger2022discrete}] Let $1 \leq p < \infty$. Then
$$ \| \nabla f\|_{L^p}^p =  p\int_{0}^{\infty} \int_{ \partial_{E} \left\{ f \geq s \right\} } \left| \nabla \min(f, s) \right|^{p-1} dx ds.$$
\end{lemma}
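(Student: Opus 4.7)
The plan is to verify the identity edge-by-edge. Both the sum $\|\nabla f\|_{L^p}^p = \sum_{x\sim y}|f(x)-f(y)|^p$ and the inner integral $\int_{\partial_E\{f\geq s\}}|\nabla\min(f,s)|^{p-1}\,dx$ on the right-hand side decompose as sums over individual edges of $G$. Since all integrands are nonnegative, Tonelli's theorem allows me to interchange sum and integral, so it suffices to show that each edge $e=(v,w)\in E$ contributes the same amount to both sides.

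Next I would fix such an edge and, without loss of generality, assume $f(v)\geq f(w)\geq 0$. The contribution of $e$ to $\|\nabla f\|_{L^p}^p$ is simply $(f(v)-f(w))^p$. On the right-hand side, $e$ lies in $\partial_E\{f\geq s\}$ precisely when exactly one endpoint has value $\geq s$, i.e.\ for $s\in(f(w),f(v)]$. For such $s$, the truncated function satisfies $\min(f,s)(v)=s$ and $\min(f,s)(w)=f(w)$, so the local gradient along $e$ equals $s-f(w)$. Consequently, the contribution of $e$ to the right-hand side is
\begin{equation*}
p\int_{f(w)}^{f(v)}(s-f(w))^{p-1}\,ds \;=\; (f(v)-f(w))^p,
\end{equation*}
which matches the left-hand side exactly.

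There is no serious obstacle here: the edges with $f(v)=f(w)$ contribute zero to both sides and can be ignored, while the hypothesis that $f$ vanishes at infinity guarantees that the edge contributions are summable whenever either side is finite (otherwise both are infinite simultaneously). The key point of the reformulation — and the reason it is useful in what follows — is that the inner integrand $|\nabla\min(f,s)|^{p-1}$ on an edge in $\partial_E\{f\geq s\}$ depends only on $s$ and on the value of $f$ at the endpoint lying outside $\{f\geq s\}$, so the values of $f$ within the super-level set play no role.
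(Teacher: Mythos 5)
Your proof is correct and follows essentially the same route as the paper: both verify the identity edge-by-edge, noting that a fixed edge $(v,w)$ contributes $|f(v)-f(w)|^p$ to the left side and $p\int_{\min\{f(v),f(w)\}}^{\max\{f(v),f(w)\}}\bigl(s-\min\{f(v),f(w)\}\bigr)^{p-1}\,ds=|f(v)-f(w)|^p$ to the right. Your additional remarks on Tonelli and the degenerate case $f(v)=f(w)$ are fine but not essential.
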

There is a quick proof: consider again a single edge $(v,w) \in E$. The contribution to the left-hand side is
$|f(v) - f(w)|^p$. 
The contribution to the right-hand side is
$$ \int_{\min\left\{f(v), f(w) \right\}}^{\max\left\{f(v), f(w) \right\}} \left(s - \min\left\{f(v), f(w) \right\}\right)^{p-1} ds = \frac{|f(v) - f(w)|^p}{p}.$$

\begin{proof}[Proof of Lemma \ref{lem2.5}]
We can assume, without loss of generality, that $||f||_\infty =1$. The modified coarea formula allows us to rewrite 
$$X =   \sum_{x \sim  y \in V} |f(x)-f(y)|^p$$
as
\begin{equation}\label{2.5}
  X =  p \int_0^{1} \sum_{(x,y) \in E(\{f>s\}, \{f>s\}^c)}|\min(s, f(x))-\min(s,f(y))|^{p-1} ds,
\end{equation}
where $E(X,Y)$ denotes the set of edges between $X, Y \subseteq V(G)$. We argue that the desired integral is monotone for each fixed $0 < s< 1$. 
Let us thus fix a value of $0 < s < 1$ and consider the quantity
$$ Y =     \sum_{(x,y) \in E(\{f>s\}, \{f>s\}^c)}|\min(s, f(x))-\min(s,f(y))|^{p-1}.$$
There is a naturally associated integer $i$ defined via
$$ f_c(1) \geq f_c(2) \geq \dots \geq  f_c(i) \geq s > f_c(i+1) \geq f_c(i+2) \geq \dots$$
The sum $Y$ is then a sum running over all edges connecting $\left\{f \geq s \right\}$ and $\left\{f < s \right\}$. We note that $\left\{f \geq s \right\}$ is
finite and has a number of neighbors is at least as big as $\partial_V^i$ (the smallest number of vertices that \textit{any} set of $i$ vertices
is adjacent to). Therefore
$$ Y \geq  \sum_{y \in \partial_V(\{f>s\})}|s-f(y)|^{p-1}.$$
We do not have too much information about $ \partial_V(\{f>s\})$ but certainly the sum is smallest if the values are as close as possible to $s$. Thus,
$$ \sum_{y \in \partial_V(\{f>s\})}|s-f(y)|^{p-1} \geq  \sum_{j=1}^{\partial_V^i} |s-f_c(i+j)|^{p-1}.$$
Using \eqref{2.3} and the fact that $G_c$ is a tree we obtain 
$$\sum_{j=1}^{\partial_V^i} |s-f_c(i+j)|^{p-1} = \sum_{(x,y) \in E(\{f_c>s\}, \{f_c>s\}^c)}|\min(s, f_c(x)) -\min(s,f_c(y))|^{p-1}.
    $$
Integrating over all $s$ and applying the modified coarea formula once more
$$ p\int_0^1 \sum_{(x,y) \in E(\{f_c>s\}, \{f_c>s\}^c)}|\min(s, f_c(x)) -\min(s,f_c(y))|^{p-1} ds = \| \nabla f_c\|_{L^p(G_c)}^p.$$
\end{proof}

\section{The Universal Comparsion Graph of $(\mathbb{Z}^2, \ell^1)$} \label{sec:specific}
For the remainder of the paper, we will be interested in the universal comparison graph of the lattice graph on $(\Z^2, \ell^1)$. In the next lemma, we prove that universal comparison graph of the lattice graph is well defined by proving $ \partial_V^{n+1} \geq \partial_V^n$.

\begin{lemma}\label{lem2.6}
Let $G$ be the lattice graph and $\partial_V^n$ be its isoperimetric number. Then, for all $n \geq 1$, we have
$ \partial_V^{n+1} \geq \partial_V^n.$
Furthermore, if $n = 2k(k+1)+1$, then
\begin{equation}\label{2.6}
    \partial_V^n = 4k+4. 
\end{equation}
\end{lemma}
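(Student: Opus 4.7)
The plan is to establish the two assertions by independent arguments. The explicit value $\partial_V^n = 4k+4$ at $n = 2k(k+1)+1$ will come from exhibiting the $\ell^1$-ball as an isoperimetric minimizer, and the monotonicity $\partial_V^{n+1} \geq \partial_V^n$ will be obtained through a pruning argument applied to the extremal vertex of an optimal set.

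For the explicit formula I would take the $\ell^1$-ball $B_k := \{(x,y) \in \mathbb{Z}^2 : |x|+|y| \leq k\}$. Counting the shells $\{|x|+|y| = j\}$, each of size $4j$ for $j \geq 1$, gives $|B_k| = 1 + \sum_{j=1}^k 4j = 2k(k+1)+1$, and the vertex boundary of $B_k$ is exactly the next shell, so $|\partial_V B_k| = 4(k+1) = 4k+4$. This yields the upper bound $\partial_V^{2k(k+1)+1} \leq 4k+4$. The matching lower bound is a direct consequence of the Wang--Wang solution to the vertex-isoperimetric problem on $(\mathbb{Z}^2, \ell^1)$ \cite{wang}: the initial segment of length $2k(k+1)+1$ in their enumeration coincides with $B_k$, so $B_k$ is an optimal set.

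For the monotonicity, let $S \subseteq \mathbb{Z}^2$ be a finite set of size $n+1$ with $|\partial_V S| = \partial_V^{n+1}$, and select $v^* = (x^*, y^*) \in S$ with $x^*$ maximal and, among those, $y^*$ maximal. Then $u := v^* + (1,0)$ is not in $S$ (by maximality of $x^*$), so $u \in \partial_V S$. The remaining three neighbors of $u$, namely $(x^*+2, y^*)$ and $(x^*+1, y^* \pm 1)$, each have strictly larger $x$-coordinate than $x^*$ and hence also lie outside $S$; thus the only neighbor of $u$ in $S$ is $v^*$ itself, so $u$ is a \emph{private} external neighbor of $v^*$. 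Setting $S' := S \setminus \{v^*\}$, the vertex $u$ drops out of the vertex boundary while at worst $v^*$ itself joins it, so $|\partial_V S'| \leq |\partial_V S| - 1 + 1 = \partial_V^{n+1}$. Since $|S'| = n$, this gives $\partial_V^n \leq |\partial_V S'| \leq \partial_V^{n+1}$. I do not foresee a serious obstacle; the only subtlety is the lexicographic choice of $v^*$, which is precisely what guarantees the private external neighbor needed for the cancellation.
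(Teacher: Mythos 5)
Your proof is correct, and both halves follow the same essential strategy as the paper: the explicit value at $n=2k(k+1)+1$ is obtained exactly as in the paper by identifying the Wang--Wang initial segment with the $\ell^1$-ball $B_k$, and the monotonicity is obtained, as in the paper, by deleting from a minimizer of size $n+1$ an extremal vertex that owns a \emph{private} external neighbor. The one genuine (and welcome) difference is in the monotonicity step: the paper runs the deletion argument on the specific Wang--Wang nested minimizer of size $n+1$, which forces it into a preliminary case analysis about corners of the $\ell^1$-sphere of radius $k+1$, whereas you run it on an \emph{arbitrary} optimal set $S$ and simply take the vertex of maximal $x$-coordinate. Your version is cleaner and self-contained for that step (it does not use nestedness of the extremizers at all there), and your verification that $u=v^*+(1,0)$ has $v^*$ as its unique neighbor in $S$ is airtight; the secondary tie-break by maximal $y^*$ is actually superfluous, since only maximality of $x^*$ is used. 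In short: same core idea, slightly more general and streamlined execution of the monotonicity half.
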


\begin{proof}
In \cite{wang} Wang and Wang studied the vertex isoperimetric problem on the lattice graph. They constructed an enumeration of the vertices such that set of vertices with label $\leq n$ has as few vertices as any set of $n$ elements: this correspond to the construction of a nested sequence of extremizers. We recall the labelling (which was already shown above) in Fig. \ref{fig:sain}. One notably feature is that the nested sequence fills up $\ell^1-$balls in a layer-by-layer fashion.
More precisely, if $x,y \in \mathbb{Z}^2$ and if $\|x\|_{\ell^1} < \|y\|_{\ell^1}$ then the label of $x$ is smaller than the label of $y$. This proves that if $n= 2k(k+1)+1$ for $k \in \N$ (this is the size of $\ell^1$ closed ball in $\Z^2$ of radius $k$) then
$$ \partial_V^n = |\partial_V(\{x \in \Z^2: \left\|x\right\|_{\ell^1} \leq k\})| = 4k+4.$$
It remains to prove that $\partial_V^{n+1} \geq \partial_V^n$. Let $2k(k+1)+1 \leq n < 2(k+1)(k+2)+1 $, for non-negative integer $k$. Assume that $\partial_V^{n+1} < \partial_V^n$. We take the set of all points with label $\leq n+1$. We start by noting that corners of $\ell^1$ ball of radius $k+1$ cannot lie in the set: if a corner lies in the set, then removing the corner does not increase the vertex boundary (see Fig. \ref{fig:sain}) which shows  $\partial_V^{n} \leq \partial_V^{n+1}$ contradicting $ \partial_V^{n+1} < \partial_V^n$. Let us now consider all those vertices whose $y-$coordinate is maximal among all the $n+1$ points in the set. Let $(x,y)$ denote one such vertex with minimal $x$-coordinate. Then removing $(x,y)$ from the set does not increase the vertex boundary since $(x,y)$ is adjacent to $(x,y+1)$ which is not adjacent to any other vertex in the set; removing $(x,y)$ removes this neighbor while only adding $(x,y)$ as a new neighbor. This contradicts $\partial_V^{n+1} < \partial_V^n$. 
\end{proof}
\vspace{-10pt}
 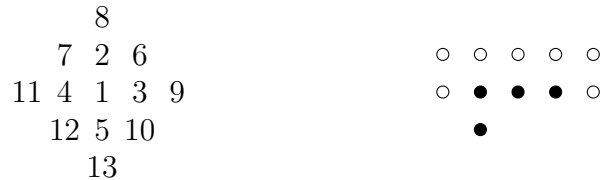
\begin{figure}[h!]
\begin{minipage}[l]{.43\textwidth}
\begin{tikzpicture}
\node at (-3,0) {};
\node at (0,0) {1};
\node at (0,0.5) {2};
\node at (0.5,0) {3};
\node at (-0.5,0) {4};
\node at (0,-0.5) {5};
\node at (0.5,0.5) {6};
\node at (-0.5,0.5) {7};
\node at (0,1) {8};
\node at (1,0) {9};
\node at (0.5,-0.5) {10};
\node at (-1,0) {11};
\node at (-0.5,-0.5) {12};
\node at (0,-1) {13};
\end{tikzpicture}
\end{minipage}
\begin{minipage}[r]{.25\textwidth}
\begin{center}
\begin{tikzpicture}
\filldraw (0,0) circle (0.08cm);
\draw (-0.5,0) circle (0.08cm);
\draw (0,0.5) circle (0.08cm);
\filldraw (0.5,0) circle (0.08cm);
\filldraw (1,0) circle (0.08cm);
\draw (1.5,0.5) circle (0.08cm);
\draw (1.5,0) circle (0.08cm);
\draw (1,0.5) circle (0.08cm);
\draw (0.5,0.5) circle (0.08cm);
\draw (-0.5,0.5) circle (0.08cm);
\filldraw (0,-0.5) circle (0.08cm);

\end{tikzpicture}
\end{center}
\end{minipage} 
\caption{Nested minimizers of the vertex-isoperimetry problem (Wang \& Wang \cite{wang}) (left) and a step in the proof.} 
\label{fig:sain}
\end{figure}

 Lemma \ref{lem2.6} implies useful geometric information about the comparison graph of $(\mathbb{Z}^2, \ell^1)$. We use 
 $ S(r;G_C) :=\{i \in V_c : d_{G_c}(i,1) = r\},$
and 
$ B(r;G_C) := \{i \in V_c : d_{G_c}(i,1) \leq r\}$
to denote the sphere and closed ball of radius $r$ in $G_c$. 
\begin{lemma} \label{lem:diam}
Let $G = (\mathbb{Z}^2, \ell^1)$ and $G_c$ be its universal comparison graph. Then
\begin{equation}
  \forall~r \in \mathbb{N}_{\geq 1} \qquad   |S(r; G_c)| = 4r \hspace{9pt} \text{and} \hspace{9pt} |B(r;G_c)| = 1+2r(r+1).
\end{equation}
\end{lemma}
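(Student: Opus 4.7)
The strategy is a straightforward induction on $r$ that simultaneously establishes the stronger statement $B(r;G_c) = \{1, 2, \dots, 1+2r(r+1)\}$ as labelled sets of vertices. Once this stronger statement is known, the count $|B(r;G_c)| = 1+2r(r+1)$ follows by inspection, and $|S(r;G_c)| = |B(r;G_c)| - |B(r-1;G_c)| = 4r$ by subtraction.

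For the base case $r=0$ we have trivially $B(0;G_c) = \{1\}$. The induction step is the core of the argument. Assume $B(r;G_c) = \{1,\dots,N\}$ where $N = 1 + 2r(r+1)$. By Lemma \ref{lem2.4} applied to $G = (\mathbb{Z}^2, \ell^1)$,
\begin{equation*}
    \partial_V(\{1,2,\dots,N\}) = \{N+1, N+2, \dots, N+\partial_V^N\}.
\end{equation*}
Since $N = 2r(r+1)+1$, Lemma \ref{lem2.6} gives $\partial_V^N = 4(r+1)$, and a direct computation yields $N + \partial_V^N = 1 + 2(r+1)(r+2)$. So the vertex boundary of $B(r;G_c)$ is precisely the block of labels $\{N+1, \dots, 1+2(r+1)(r+2)\}$.

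It remains to identify this block with $S(r+1;G_c)$. Here I would invoke Lemma \ref{lem2.3}, which asserts that $G_c$ is a tree: every vertex in $\partial_V(B(r;G_c))$ is adjacent (in $G_c$) to some vertex of $B(r;G_c)$ but lies outside it, so its distance from $1$ is at most $r+1$, and at least $r+1$ since it is not in $B(r;G_c)$. Hence $S(r+1;G_c) = \partial_V(B(r;G_c))$, which closes the induction and yields $B(r+1;G_c) = \{1,\dots,1+2(r+1)(r+2)\}$.

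The only potentially delicate step is verifying that the isoperimetric value $\partial_V^N = 4(r+1)$ really matches up with the geometry, but this is exactly the content of Lemma \ref{lem2.6}: the $\ell^1$ ball in $\mathbb{Z}^2$ of radius $r$ has $1+2r(r+1)$ vertices and vertex boundary of size $4(r+1)$, which is precisely the monotone jump reflected in the definition of $G_c$. Everything else is bookkeeping.
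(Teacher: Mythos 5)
Your proof is correct and follows essentially the same route as the paper: both arguments reduce the claim to the layer-by-layer structure of $G_c$ coming from the nested minimizers, i.e.\ to the facts recorded in Lemma \ref{lem2.4} and the value $\partial_V^{2r(r+1)+1}=4(r+1)$ from Lemma \ref{lem2.6}. Your inductive write-up is simply a more explicit version of the paper's one-line appeal to Wang--Wang (and, as a minor aside, the identity $S(r+1;G_c)=\partial_V(B(r;G_c))$ holds in any connected graph, so Lemma \ref{lem2.3} is not actually needed there).
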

 This follows immediately from the existence of nested minimizers (Wang \& Wang \cite{wang}), see also Fig. \ref{fig:ucg0}. It implies that 
$$\{i \in V_c : d_{G_c}(i,1) = r\} = \left\{v \in \mathbb{Z}^2: \|v\|_{\ell^1} = r \right\}$$
from which one deduces $|S(r; G_c)| = 4r$ and then, by summation,
$$  |B(r;G_c)| = 1 + \sum_{i=1}^{r} 4r = 1 + 2r(r+1).$$

\section{Embedding Edges into Universal Comparison graph}\label{sec:mapping}
For the purpose of this section we will assume again that $G= (\Z^2, \ell^1)$ is the lattice graph on $\Z^2$ and that $G_c$ is the associated universal comparison graph. In this section, we assume that vertices of $G$ are labelled using the spiral labelling on $\Z^2$, as defined in Figure \ref{fig:spiral}. The rest of this section is devoted to the study of the map
$$ \Psi : E_{(\mathbb{Z}^2, \ell^1)} \rightarrow \{\text{paths in}\hspace{3pt} G_c\},$$
defined as follows: Let $i<j$ and $(i,j) \in E_{(\mathbb{Z}^2, \ell^1)}$ be an edge in the lattice graph. Then $\Psi(i, j)$ is defined as the shortest path in the tree $G_c$ between the vertex $i$ and smallest vertex $k \in V_c$ such that
\begin{enumerate}
\item $k \geq j$ 
\item and $i$ lies in the path between $1$ and $k$ in $G_c$. 
\end{enumerate}
The second condition could also be phrased as follows: since $G_c$ is a tree with no leaves (Lemma \ref{lem2.2}), we may think of the vertex 1 as a root. Then the vertex $i$ has infinitely many descendants (among which we pick the smallest one, $k$, that is at least as big as $j$). We will show that $\Psi$ maps edges to paths with uniformly bounded length.
 \begin{figure}[h!]
\begin{minipage}[l]{.3\textwidth}
\begin{tikzpicture}
\node at (-3,0) {};
\node at (0,0) {1};
\node at (0,0.5) {2};
\node at (0.5,0) {3};
\node at (-0.5,0) {4};
\node at (0,-0.5) {5};
\node at (0.5,0.5) {6};
\node at (-0.5,0.5) {7};
\node at (0,1) {8};
\node at (1,0) {9};
\node at (0.5,-0.5) {10};
\node at (-1,0) {11};
\node at (-0.5,-0.5) {12};
\node at (0,-1) {13};
\end{tikzpicture}
\end{minipage}
\begin{minipage}[l]{.53\textwidth}
\begin{tikzpicture}
\node at (-3,0) {};
\filldraw (0,0) circle (0.06cm);
\filldraw (0,0.5) circle (0.06cm);
\filldraw (-0.5,0) circle (0.06cm);
\filldraw (0,-0.5) circle (0.06cm);
\filldraw (0.5,0.5) circle (0.06cm);
\filldraw (-0.5,0.5) circle (0.06cm);
\filldraw (0,1) circle (0.06cm);
\filldraw (1,0) circle (0.06cm);
\filldraw (0.5,-0.5) circle (0.06cm);
\filldraw (-0.5,-0.5) circle (0.06cm);
\filldraw (-1,0) circle (0.06cm);
\filldraw (0,-1) circle (0.06cm);
\filldraw (0.5,0) circle (0.06cm);
\draw [thick] (-0.5, 0) -- (0.5, 0);
\draw [thick] (0,-1) -- (0,0.5);
\draw [thick] (-0.5,0.5) -- (0.5, 0.5);
\draw [thick] (0,0.5) -- (0,1);
\draw [thick] (0.5, 0) -- (1,0);
\draw [thick] (0.5, 0) -- (0.5,-0.5);
\draw [thick] (-0.5, 0) -- (-1,0);
\draw [thick] (-0.5, 0) -- (-0.5,-0.5);
\end{tikzpicture}
\end{minipage}
\caption{Nested minimizers of the vertex-isoperimetry problem (left) and the universal comparison graph over the same vertex set (right).} 
\label{fig:ucg}
\end{figure}
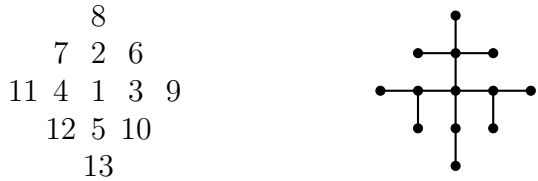

\begin{lemma} \label{lem:b} Suppose $(m,n)$, with $m < n$, is an edge in $(\mathbb{Z}^2, \ell^1)$ (where $m,n$ are integers and the corresponding vertices are with respect to the spiral labeling). Then
$$ n \leq m + 7\sqrt{m}.$$
 \end{lemma}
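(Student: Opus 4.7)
The plan is to exploit the explicit $\ell^\infty$-shell structure of the spiral labelling. By inspection of Figure~\ref{fig:spiral} (formalized by induction on $k$), the spiral fills the $\ell^\infty$-ball of radius $k$ using exactly the first $(2k+1)^2$ labels, so each shell $S_k := \{v \in \Z^2 : \|v\|_\infty = k\}$ receives precisely the $8k$ consecutive labels $(2k-1)^2 + 1, \dots, (2k+1)^2$. Consequently, if the vertex labelled $m$ lies on $S_k$, then $m \geq (2k-1)^2 + 1$, and so $\sqrt{m} \geq 2k-1$ for all $k \geq 1$.

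Since $\|\cdot\|_\infty$ is $1$-Lipschitz on $\Z^2$, an $\ell^1$-edge $(m,n)$ with $m<n$ must have endpoints on shells $S_k$ and $S_{k'}$ with $k' \in \{k, k+1\}$. I would split into two cases.

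\emph{Same shell} ($k' = k$): the vertices of $S_k$ form an $\ell^1$-cycle of length $8k$ which the spiral traverses in order, so the only $\ell^1$-adjacent pairs on $S_k$ are consecutive labels or the single wrap-around pair. Hence $n - m \in \{1,\, 8k-1\}$, and $8k-1 \leq 7(2k-1) \leq 7\sqrt{m}$ whenever $k \geq 1$.

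\emph{Adjacent shells} ($k' = k+1$): each non-corner vertex of $S_k$ has exactly one outward $\ell^1$-neighbor on $S_{k+1}$. Writing the spiral label as an explicit affine function of position on each of the four sides of $S_k$, a direct computation shows that the label jump from $v_m$ to this outward neighbor equals $8k+1$ on the right side, $8k+3$ on the top, $8k+5$ on the left, and $8k+7$ on the bottom (the four corners match the bound on the adjacent side). Hence $n - m \leq 8k + 7$. The maximum value $8k+7$ is achieved only when $v_m$ lies on the bottom side of $S_k$ or at the adjacent corners $(\pm k,-k)$, where $m \geq 4k^2 + 2k + 1$; squaring the desired inequality reduces it to $(8k+7)^2 \leq 49(4k^2 + 2k + 1)$, i.e.\ $132k^2 \geq 14k$, which holds for every $k \geq 1$. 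The analogous inequalities for the other three sides are easier since the jump is smaller while the lower bound on $m$ is not much weaker.

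The base case $m=1$ is handled directly: the four $\ell^1$-neighbors of the origin carry the labels $2,4,6,8$, so $n - 1 \leq 7 = 7\sqrt{1}$ (the estimate is sharp here). The main obstacle is the bookkeeping for the four sides of each shell, but this is routine once one fixes the convention (visible in Figure~\ref{fig:spiral}) that the spiral enters $S_k$ at $(k,-(k-1))$ and proceeds counter-clockwise. Conceptually, the proof rests on a simple self-correcting mechanism: a near-maximal cross-shell jump of size $\approx 8k$ forces $m$ into the late portion of $S_k$, where already $m \gtrsim 4k^2$, so $\sqrt{m} \gtrsim 2k$ and the jump is bounded by $\approx 4\sqrt{m}$; the slack in the constant $7$ then absorbs the small-$m$ boundary cases.
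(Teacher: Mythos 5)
Your proof is correct, but it takes a more refined route than the paper's. The paper argues by crude containment: the vertex $m$ lies on the boundary gnomon of an $\ell \times \ell$ square with $\ell = \lfloor \sqrt{m}\rfloor$, so $m \geq (\ell-1)^2+1$, while any neighbour must lie inside the $(\ell+2)\times(\ell+2)$ square, so $n \leq (\ell+2)^2$; the inequality $(\ell+2)^2 \leq (\ell-1)^2+1+7\sqrt{(\ell-1)^2+1}$ is then checked for $\ell \geq 10$ and the remaining small cases are verified by hand. You instead compute the label jump along every edge exactly, using the fact that the shell $S_k$ carries precisely the labels $(2k-1)^2+1,\dots,(2k+1)^2$: within a shell the jump is $1$ or $8k-1$, and across shells it is at most $8k+7$, with the larger jumps forced onto vertices whose labels are already large ($m \geq 4k^2+2k+1$ on the bottom side). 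I checked your claimed side-by-side jumps ($8k+1, 8k+3, 8k+5, 8k+7$) and the corner cases against the labelling in Figure \ref{fig:spiral} for $k=1,2$ and they are right; the quadratic inequalities you reduce to are all valid for $k \geq 1$. What your approach buys is twofold: it eliminates the ``verify small cases by hand'' step entirely, and it actually establishes the asymptotically stronger bound $n - m \leq 4\sqrt{m} + O(1)$, which substantiates the paper's remark after the lemma that the constant $7$ could be improved for large $m$. The cost is the side-by-side bookkeeping, which the paper's containment argument avoids.
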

\begin{proof}
The vertex $m$ is on the boundary of an $\ell \times \ell$ square where $\ell = \left\lfloor \sqrt{m} \right\rfloor$. The smallest integer at the boundary of an $\ell \times \ell$ square is $(\ell - 1)^2 + 1$. Thus $m \geq (\ell - 1)^2 + 1$. Any neighbor has to be contained in an $(\ell + 2) \times (\ell + 2)$ square. The largest number arising in such a square is $(\ell + 2)^2$. Thus $n \leq (\ell + 2)^2$. We have
$$ n \leq (\ell + 2)^2 \leq (\ell - 1)^2 + 1 + 7 \sqrt{(\ell - 1)^2 + 1 }$$
for all $\ell \geq 10$. The cases $\ell \leq 10$ can be verified by hand.\end{proof}
We note that the argument is tight because $(1,8)$ is indeed an edge in $\mathbb{Z}^2$ with respect to the spiral labeling. It seems that one could asymptotically improve the constant for larger $m$ but it is not entirely clear how this could be leveraged into a better result.

\begin{lemma}\label{lem:c} $\Psi$ maps edges in $E_{(\mathbb{Z}^2, \ell^1)}$  to paths of length at most $4$. 
\end{lemma}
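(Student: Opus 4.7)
The plan is to show that for any edge $(m,n)$ in $(\mathbb{Z}^2,\ell^1)$ with $m<n$, the smallest descendant $k$ of $m$ in $G_c$ with $k \geq n$ sits at tree-distance at most $4$ from $m$; since $G_c$ is a tree and $k$ is a descendant of $m$, this tree-distance is exactly the length of $\Psi(m,n)$.

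Let $r = d_{G_c}(m,1)$ be the depth of $m$ in the rooted tree $G_c$. By Lemma~\ref{lem:diam}, the ball $B(r;G_c)$ has $1+2r(r+1)$ vertices, and since children always carry larger labels than parents, these must be the labels $\{1,\ldots,2r^2+2r+1\}$; in particular $m \leq 2r^2+2r+1$. Because $G_c$ has no leaves (Lemma~\ref{lem2.3}), the subtree rooted at $m$ contains at least one vertex at every depth $\geq r$. Let $\beta$ denote the largest label among descendants of $m$ at depth exactly $r+4$. Since $\beta$ belongs to the sphere $S(r+4;G_c) = \{2(r+3)(r+4)+2,\ldots,2(r+4)(r+5)+1\}$, Lemma~\ref{lem:diam} yields
$$ \beta \;\geq\; 2(r+3)(r+4)+2 \;=\; 2r^2+14r+26.$$

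Combining this with the upper bound $n \leq m+7\sqrt{m}$ from Lemma~\ref{lem:b} and the inequality $m \leq 2r^2+2r+1$, to conclude $\beta \geq n$ it suffices to verify
$$ 2r^2+14r+26 \;\geq\; 2r^2+2r+1 + 7\sqrt{2r^2+2r+1},$$
equivalently $12r+25 \geq 7\sqrt{2r^2+2r+1}$. Both sides being non-negative, squaring reduces this to $46r^2+502r+576 \geq 0$, which is manifest for every $r \geq 0$. Thus $\beta \geq n$, so $\beta$ itself witnesses that the smallest descendant $k$ of $m$ with $k \geq n$ satisfies $k \leq \beta$. But every label at depth $>r+4$ in $G_c$ is at least $2(r+4)(r+5)+2 > \beta$ (again by Lemma~\ref{lem:diam}, since consecutive sphere ranges are disjoint), so $k$ must lie at depth at most $r+4$, and $\Psi(m,n)$ has length at most $4$.

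The only subtle point is a rate-matching issue: the excess $n-m$ grows at most as $7\sqrt{m}$ by Lemma~\ref{lem:b}, while four tree-layers below $m$ enlarge the admissible label range by roughly $16r \sim 11.3\sqrt{m}$, which comfortably dominates. I find it interesting that no fine-grained analysis of the specific shape of the subtree rooted at $m$ (the $\alpha_t,\beta_t$ intervals one could recursively track) is needed — replacing $\beta$ by the crude floor $\min S(r+4;G_c)$ of the entire sphere at depth $r+4$ already closes the argument.
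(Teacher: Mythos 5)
Your proof is correct and follows essentially the same route as the paper's: bound $n-m$ by Lemma~\ref{lem:b}, use Lemma~\ref{lem:diam} and the no-leaves property to show that four levels below $m$ the labels have already advanced past $m+7\sqrt{m}$. The only difference is cosmetic — you compare $\min S(r+4;G_c)$ directly with $m+7\sqrt{m}$, which reduces to the clean polynomial inequality $46r^2+502r+576\geq 0$, whereas the paper counts the sizes of the three intermediate shells and checks $6\sqrt{2m-1}+18>7\sqrt{m}$.
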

\begin{proof} Let us pick an edge $(m,n) \in (\mathbb{Z}^2, \ell^1)$ where $m < n$ and the numbering refers to the spiral labeling of $(\mathbb{Z}^2, \ell^1)$. We use
$r = d_{G_c}(1,m)$ to denote the distance between $m$ and $1$ in the universal comparison graph. Lemma \ref{lem:diam} implies that
$$ 1 + 2(r-1)(r+1) \leq m \leq 1 + 2r(r+1).$$
The upper bound implies  
$$r \geq \frac{\sqrt{2m-1}-1}{2} = X.$$
Appealing once more to Lemma \ref{lem:diam} shows that we expect $ \geq 4X+4, 4X+8, 4X+12$ vertices at distance $r+1$, $r+2$ and $r+3$, respectively. This means there are at least
$$12X + 24  = 6 \sqrt{2m-1} + 18 \geq  8\sqrt{m} + 15 >7\sqrt{m}$$
 vertices at distance $r+1 \leq r+3$. Lemma \ref{lem:b} implies that
picking a descendant $k$ of $m$ at distance 4 ensures $k \geq n$. Thus $\Psi$ maps edges to a path of length at most 4. 
\end{proof}

\begin{lemma}\label{lem3.3}
Let $e \in E_c$ be an edge in $G_c$. Then there are at most 16 edges $(i, j) \in E$ in the lattice graph such that $e$ lies in the path $\Psi(i, j)$.
\end{lemma}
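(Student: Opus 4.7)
The plan is to exploit the tree structure of $G_c$ (Lemma \ref{lem2.2}) together with the length-$4$ bound from Lemma \ref{lem:c}, reducing the question to a purely combinatorial count of ancestors and lattice neighbors. First I fix $e = (a,b) \in E_c$ with $a < b$. Since every vertex $n \geq 2$ of $G_c$ has a unique smaller neighbor, the tree $G_c$ is naturally rooted at $1$ and $a$ is the parent of $b$. Any lattice edge $(i,j)$ with $i < j$ whose image $\Psi(i,j)$ contains $e$ corresponds to a descending path in $G_c$ from $i$ to the descendant $k = k(i,j)$ prescribed by the definition of $\Psi$. For $e$ to lie on this descending path I need $i$ to be an ancestor of $a$ (possibly $a$ itself) and $k$ to be a descendant of $b$ (possibly $b$ itself), and Lemma \ref{lem:c} then gives
\[ d_{G_c}(i,a) + 1 + d_{G_c}(b,k) = d_{G_c}(i,k) \leq 4. \]

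The counting step is then short. From the displayed inequality I read off $d_{G_c}(i,a) \leq 3$, so $i$ is one of at most four vertices: $a$ itself together with at most three successive ancestors of $a$ in $G_c$ (parent, grandparent, great-grandparent, insofar as they exist). For each such $i$, the vertex $i$ has exactly four neighbors in the lattice graph $(\mathbb{Z}^2,\ell^1)$, yielding at most four possible values of $j$ with $(i,j) \in E$ and $i<j$. Multiplying these two bounds produces at most $4 \cdot 4 = 16$ edges $(i,j) \in E$ whose $\Psi$-image can pass through $e$.

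I do not expect a real obstacle here. The only point requiring care is the interpretation of ``$e \in \Psi(i,j)$'' at the level of the tree: because $k$ is defined to be a descendant of $i$ in $G_c$ and $G_c$ is a tree, the path $\Psi(i,j)$ is literally the unique descending path from $i$ to $k$, so containing $e$ forces $i$ to lie above $e$ and $k$ below it, justifying the additivity of distances used above. The constant $16$ is almost certainly not sharp, but it is all that is needed for the subsequent application to Theorem \ref{thm1.5}.
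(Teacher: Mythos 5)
Your argument is correct and is essentially the paper's own proof: both use the tree structure to force the source vertex $i$ to be one of at most $4$ ancestors of the lower endpoint of $e$ (via the length-$\leq 4$ bound from Lemma \ref{lem:c}), then multiply by the $4$ lattice neighbors of each such $i$. Your version is in fact slightly more explicit in isolating the inequality $d_{G_c}(i,a)+1+d_{G_c}(b,k)\leq 4$, but the route is the same.
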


\begin{proof} 
Let $e \in E_c$ be an edge in the universal comparison graph (which is a tree). We recall that $\Psi$ maps edges $(i,j) \in E_{(\mathbb{Z}^2, \ell^1)}$ to a path from $i$ to $k$ where $k$ is a descendant of $i$ at distance at most 4. This means that if we take the shortest path from the edge $e$ to the root, we are bound to find the vertex $i$ at distance at most 4 from the edge $e$. Thus there are at most 4 different vertices $i$ that could be the source of a path containing $e$. Since each vertex $i \in \mathbb{Z}^2$ has 4 neighbors, there are at most 16 edges $(i,j) \in E$ that could be mapped to a path containing $e$.
\end{proof}

\section{Proof of  Theorem \ref{thm1.5}}\label{sec:mainproof}
\begin{proof}[Proof of Theorem \ref{thm1.5}]
Let $f^*$ be the rearrangement of $f$ with respect to the spiral labelling on $\Z^2$ and let $f_c$ be the comparison function of $f$ on the universal comparison graph of the lattice graph. Consider an edge $(i, j) \in E$ and $\Psi(i, j)= \{x_0,x_1,..,x_n\}$ be a path in $G_c$ with $x_0<x_1<...<x_n$ and $n \leq 4$ (see Lemma \ref{lem:c}). Note that $x_0 = i$ and
$x_{n} \geq j$ from which we deduce $f^*(x_n) \leq f^*(j)$.
Jensen's inequality applied to $x \mapsto x^p$ for $p \geq 1$ implies that for $a_1, \dots, a_n > 0$
$$ \left( \frac{a_1 + \dots + a_n}{n} \right)^p \leq \frac{a_1^p}{n} + \dots +  \frac{a_n^p}{n}$$
and thus
$$ (a_1 + \dots + a_n)^p \leq n^{p-1} \left(a_1^p + \dots + a_n^p\right).$$
Therefore, using this together with $n\leq 4$ and the triangle inequality,
\begin{align*}
    |f^*(i)-f^*(j)|^p &\leq |f_c(x_0)-f_c(x_n)|^p\\
    &= \left| \sum_{k=0}^{n-1} f_c(x_k)-f_c(x_{k+1}) \right|^p \leq 4^{p-1} \sum_{k=0}^{n-1} |f_c(x_k)-f_c(x_{k+1})|^p.
\end{align*}
We now sum both sides of the inequality over all edges $(i,j) \in E_{(\mathbb{Z}^2, \ell^1)}$. Appealing to Lemma \ref{lem3.3}, we deduce that we end up summing over each edge in the universal comparison graph at most 16 times and thus, together with  Lemma \ref{lem2.5},
\begin{equation}\label{4.1}
    \left\|\nabla f^*\right\|_{L^p(\mathbb{Z}^2)}^p \leq 4^{p+1} \left\|\nabla f_c\right\|_{L^p(G_c)}^p \leq 4^{p+1}  \left\|\nabla f\right\|_{L^p(\mathbb{Z}^2)}^p.
\end{equation}
\end{proof}

\section{Proof of Theorem \ref{thm:wang}} \label{sec:wang}
\begin{proof} 
The proof is similar in style to the proof of Theorem \ref{thm1.5}, however, extremal properties of the Wang-Wang enumeration simplifies various steps.
 \begin{figure}[h!]
\begin{minipage}[l]{.3\textwidth}
\begin{tikzpicture}
\node at (-3,0) {};
\node at (0,0) {1};
\node at (0,0.5) {2};
\node at (0.5,0) {3};
\node at (-0.5,0) {4};
\node at (0,-0.5) {5};
\node at (0.5,0.5) {6};
\node at (-0.5,0.5) {7};
\node at (0,1) {8};
\node at (1,0) {9};
\node at (0.5,-0.5) {10};
\node at (-1,0) {11};
\node at (-0.5,-0.5) {12};
\node at (0,-1) {13};
\end{tikzpicture}
\end{minipage}
\begin{minipage}[l]{.53\textwidth}
\begin{tikzpicture}
\node at (-3,0) {};
\filldraw (0,0) circle (0.06cm);
\filldraw (0,0.5) circle (0.06cm);
\filldraw (-0.5,0) circle (0.06cm);
\filldraw (0,-0.5) circle (0.06cm);
\filldraw (0.5,0.5) circle (0.06cm);
\filldraw (-0.5,0.5) circle (0.06cm);
\filldraw (0,1) circle (0.06cm);
\filldraw (1,0) circle (0.06cm);
\filldraw (0.5,-0.5) circle (0.06cm);
\filldraw (-0.5,-0.5) circle (0.06cm);
\filldraw (-1,0) circle (0.06cm);
\filldraw (0,-1) circle (0.06cm);
\filldraw (0.5,0) circle (0.06cm);
\draw [thick] (-0.5, 0) -- (0.5, 0);
\draw [thick] (0,-1) -- (0,0.5);
\draw [thick] (-0.5,0.5) -- (0.5, 0.5);
\draw [thick] (0,0.5) -- (0,1);
\draw [thick] (0.5, 0) -- (1,0);
\draw [thick] (0.5, 0) -- (0.5,-0.5);
\draw [thick] (-0.5, 0) -- (-1,0);
\draw [thick] (-0.5, 0) -- (-0.5,-0.5);
\end{tikzpicture}
\end{minipage}
\caption{Nested minimizers of the vertex-isoperimetry problem (left) and the universal comparison graph over the same vertex set (right).} 
\label{fig:ucg2}
\end{figure}
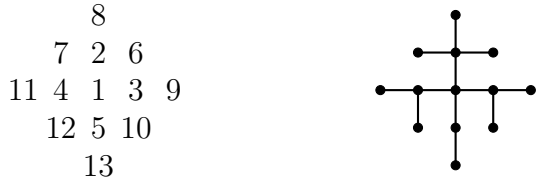

We map everything into the universal comparison tree and use Lemma \ref{lem2.5}
$$ \| \nabla f \|_{L^p(\mathbb{Z}^2)} \geq \| \nabla f_c\|_{L^p(G_c)}.$$
Note that the Wang-Wang construction uniformly minimizes the
vertex-boundary, the neighbors of $\left\{v_1, \dots, v_n\right\}$ in $\mathbb{Z}^2$
are exactly the same neighbors as the neighbors of $\left\{1, \dots, n\right\}$ in $G_c$:
they are given by 
\begin{equation}\label{6.1}
    \partial_V(\left\{v_1, \dots, v_n\right\}) = \left\{v_{n+1}, v_{n+2}, \dots, v_{n+\partial_V^n} \right\}
\end{equation}
in both cases. For $G_c$ this follows by construction (see Lemma \ref{lem2.4}), for $\mathbb{Z}^2$ with the
Wang-Wang enumeration this follows from the fact that the solutions are \textit{nested}: this means (see, for example, Bezrukov \& Serra \cite{bez}) that
\begin{enumerate}
\item there exists a nested sequence of sets of vertices
$$ A_1 \subset A_2 \subset A_3 \subset \dots$$
such that $\# A_i = i$ and $A_i$ is adjacent to as few vertices as possible for a set of vertices with $i$ elements and, moreover,
\item such that for all $i \in \mathbb{N}$ there exists $j \in \mathbb{N}$ such that $A_i \cup \partial_V(A_i) = A_j$.
\end{enumerate}

The main difference (see also Fig. \ref{fig:ucg2}) is that $\mathbb{Z}^2$ contains
some edges that are not contained in $G_c$. Using the Wang-Wang enumeration, it suffices to consider all edges $(i,j) \in \mathbb{Z}^2$ with $\|i\|_{\ell^1} + 1 = \|j\|_{\ell^1} =: r \geq 1$. There are two cases:
\begin{enumerate}
    \item $j$ is a corner point of $\ell^1$ ball of radius $r$. There is exactly one neighbour $i$ of $j$ in the $\ell^1$ ball of radius $r-1$ with $\|i\|_{\ell^1} = r-1$. Then using \eqref{6.1} we get,
    \begin{equation}\label{6.2}
        i+ \partial_V^{i-1} \leq j \leq i + \partial_V^i.
    \end{equation}
    Inequality \eqref{6.2} with the Definition \ref{def2.1} of comparison graph proves that $j$ is a neighbour of $i$ in $G_c$. 
    
    \item $j$ is not a corner point of $\ell^1$ ball of radius $\|j\|_{\ell^1}$. It is easy to see that $j$ will have exactly two neighbours in the ball of radius $r-1$ say $i_1, i_2$ with $\|i_1\|_{\ell^1} = \|i_2\|_{\ell^1} = r-1$. W.l.o.g. assume that $i_1 < i_2$. This shows
    \begin{equation}
        i_1 + \partial_V^{i_1-1} \leq j \leq i_1 + \partial_V^{i_1}
    \end{equation}
    Similarly, this proves that $j$ is connected to $i_1$ and not $i_2$ in $G_c$.   
\end{enumerate}
This proves that the comparison graph $G_c$ is obtained from $\Z^2$ by removing some of the edges between two consecutive $\ell^1$ spheres. 
The mapping 
$$ \Psi : E_{(\mathbb{Z}^2, \ell^1)} \rightarrow \{\text{paths in}\hspace{3pt} G_c\}$$
is now very simple: if $(i,j) \in E_{(\mathbb{Z}^2, \ell^1)}$, then we map $(i,j)$ to $(i,j)$ if that edge happens
to be in $G_c$. If not, then there exists exactly one $(k,j) \in G_c$ with $k < i$ and we map the edge to that.
In particular, comparing to the previous proof, $\Psi$ is mapping edges to edges and no application of Jensen's inequality is needed. 
Each edge in $G_c$ has a pre-image of cardinality at most 2 under $\Psi$ and thus, summing over all edges,
$$ \| \nabla f_c \|^p_{L^p(\mathbb{Z}^2, \ell^1)} \leq 2 \cdot \| \nabla f_c \|^p_{L^p(G_c)} \leq 2\cdot \| \nabla f \|^p_{L^p(\mathbb{Z}^2)}.$$

\end{proof}

\section{Proof of Theorem \ref{thm:abstract} and Corollary \ref{thm:cor}}
\label{sec:last}

\begin{proof}[Proof of Theorem \ref{thm:abstract}] The proof follows the same steps that we followed in the proof of the two-dimensional results. Much of the combinatorial complexity comes from explicitly bounding various constants which here are encapsulated in a more abstract condition. We start by returning to the notion of the universal comparison graph developed in \S 2.
We can apply Lemma 2.5 to deduce that
$$  \| \nabla f\|_{L^p(G)} \geq  \| \nabla f\|_{L^p(G_c)}$$
where $G_c$ is the universal comparison graph. Note that, as before, $G_c$ is an infinite tree with no leaves. It remains to compare edges in $G$ with short paths in $G_c$ just as we did before. As before, we now assume that we are given an enumeration of the vertices $v_1, v_2, \dots$ satisfying all assumptions of the Theorem.
We consider again the map
$$ \Psi : E_{G} \rightarrow \{\text{paths in}\hspace{3pt} G_c\},$$
defined in the same way as above: if $i<j$ and $(v_i,v_j) \in E(G)$ be an edge in the graph, then $\Psi(v_i,v_j)$ is defined as the shortest path in the tree $G_c$ between the vertex $v_i \in V_c$ and smallest vertex $v_k \in V_c$ such that
\begin{enumerate}
\item $k \geq j$ 
\item and $v_i$ lies in the shortest path between $v_1$ and $v_k$ in $G_c$. 
\end{enumerate}
Suppose now that $(v_i, v_j) \in E(G)$. The assumption in the Theorem says that the neighbors of $\left\{v_1, \dots, v_i\right\}$ are not too many, the set
is uniformly close to a vertex-isoperimetric set (up to a constant $c$). More precisely, since $v_j$ is adjacent to the set and $j > i$, we deduce that
$$ j \leq i + c \cdot \partial_V^i.$$
Mentally fixing $v_1$ as the root of the infinite tree, we can now pick the vertex $v_i$. The distance between these two vertices is
$r = d_{G_c}(v_1, v_i)$. By construction of the universal comparison graph, we have that 
$$ \# \left\{v \in G_c: d(v, v_1) = r+1\right\} \geq \partial_V^{i}.$$
Using the monotonicity of $\partial_V^n$, we deduce that the same inequality holds for larger `spherical shells' $ \# \left\{v \in G_c: d(v, v_1) = r+\ell\right\}$ and any
$\ell \geq 1$. By summation,
 $$\# \left\{v \in G_c:  r+1 \leq d(v, v_1) \leq r+\ell\right\} \geq \ell \cdot \partial_V^i.$$
 This shows that there is a descendant $v_k$ of $v_i$ satisfying $d_{G_c}(v_i, v_k) \leq c+1$ as well as $k \geq j$. Thus $\Psi$ is mapping edges to paths with length bounded by $c+1$. This means, appealing to both the tree structure and the construction of $\Psi$ that each edge $e \in G_c$ can only appear as the image of $\Psi$
 of an edge that is adjacent to at most $c+1$ different vertices in $G$. Since each vertex in $G$ has degree bounded from above by $D$, there are at most $(c+1) \Delta$ edges in question. Jensen's inequality implies, as above,
\begin{align*}
    |f^*(i)-f^*(j)|^p \leq (c+1)^{p-1} \cdot \sum_{k=0}^{n-1} |f_c(x_k)-f_c(x_{k+1})|^p,
\end{align*}
since $n \leq c+1$. Summing again both sides over all edges, we deduce
$$ \| \nabla f^*\|^p_{L^p} \leq (c+1)^{p} \Delta \cdot  \| \nabla f\|^p_{L^p}.$$
\end{proof}

\begin{proof}[Proof of Corollary \ref{thm:cor}]
The argument appeals to Theorem \ref{thm:abstract}. We first note that $\partial_V^{n+1} \geq \partial_V^n$ remains true and can be shown just as in $d=2$ (Lemma \ref{lem2.6}). We first recall a well-known isoperimetric inequality (see \cite{boll, boll2, wang}) telling us that for some universal constant $\alpha_d>0$ depending only on the dimension
$ \partial_{(\mathbb{Z}^d,\ell^1)}^n \geq \alpha_d \cdot n^{\frac{d-1}{d}}.$
The second observation is that if $A \subset \mathbb{Z}^d$ is a set such that 
$$  \exists k \in \mathbb{N} ~\forall~a \in A \qquad \|a\|_{\ell^1} \in \left\{k, k+1 \right\} $$
then $ \partial_V(A) \leq C_d (\#A)^{\frac{d-1}{d}}$. This follows from the fact that one can control the volume of a $1-$neighborhood of a convex domain in $\mathbb{R}^n$ from above by a multiple of its surface area (provided the convex domain is not too small in volume) and a simple comparison argument. Since $\alpha_d$ and $C_d$ are both universal constants, we can apply Theorem \ref{thm:abstract} to deduce the desired result.
\end{proof}

\textbf{Acknowledgment.}
The first author would like to thank Ari Laptev and Ashvni Narayanan for various useful discussions.

%%%%%%%%%%%%%%%%%%%%%%%%%%%%%%%%%%%%%%%%%%%
%%%%%%%%%%%%%%%%%%%%%%%%%%%%%%%%%%%%%%%%%%%

\end{document}